\documentclass[reqno,11pt]{amsart}
\usepackage[utf8]{inputenc}  %upside down !
\usepackage{blkarray}
\usepackage{float} % H means exactly here

\newcommand{\ds}{\displaystyle}

\newcommand{\tensor}{\otimes}

\newcommand{\op}{\mathcal}

\newcommand{\cdc}{,\dots,}

\newcommand{\FT}{\mathsf{ft}}
\newcommand{\FTGK}{\mathsf{FT}}

\usepackage{fullpage}
\usepackage{amsmath}%
\usepackage{amsthm}
\usepackage{amsfonts}%
\usepackage{amssymb}%
\usepackage{graphicx}
\usepackage{xy,amsthm,enumerate,xypic,array}  %xypic,eufrak}
\usepackage{xcolor}

\usepackage{mathtools}

\DeclarePairedDelimiter\floor{\lfloor}{\rfloor}

\input{xy}
\xyoption{all}

\numberwithin{equation}{section}

\newtheorem{theorem}{Theorem}[section]
\theoremstyle{plain}

\newtheorem{corollary}[theorem]{Corollary}
\newtheorem{lemma}[theorem]{Lemma}
\newtheorem{proposition}[theorem]{Proposition}

\theoremstyle{definition}
\newtheorem{definition}[theorem]{Definition}

\allowdisplaybreaks[2]

\addtolength{\oddsidemargin}{-.1in}
\addtolength{\evensidemargin}{-.1in}
\addtolength{\textwidth}{0.2in}

\setcounter{tocdepth}{1}
\addtocounter{MaxMatrixCols}{2}

%--------------------------------------------------------

\begin{document}
\title{Lie graph homology model for $\mathfrak{grt}_1$.} 
\author{Benjamin C. Ward}
%\email{bward@math.su.se}

%\date{December 20, 2010 }
%\subjclass{Primary , ?; Secondary ?, ?} 
%\keywords{Keyword one, keyword two etc.}
\maketitle
\begin{abstract}  This paper develops a new chain model for the commutative graph complex $\mathsf{GC}_2$ which takes Lie graph homology as an input.  Our main technical result is the identification of a large contractible complex of (certain) tadpoles and higher genus vertices of the Feynman transform of Lie graph homology.    Using this result we identify the anti-invarints of Lie graph homology in genus $2$ with relations between bracketings of conjectural generators of $\mathfrak{grt}_1$ in depth 2 modulo depth 3, unifying two {\it a priori} disparate appearances of the space of modular cusp forms in the study of graph homology. 
\end{abstract}

\section{Introduction}

The purpose of this article is to define and study a new chain model for commutative graph homology.  Let us begin by introducing the question which motivated this study.  It concerns a seemingly coincidental appearance of the space of modular cusp forms in both the commutative and Lie variants of graph homology.

The variant of commutative graph homology studied here-in was shown by Willwacher to be related to the Grothendieck-Teichm{\"u}ller Lie algebra $\mathfrak{grt}_1$ \cite{WTw}.  This Lie algebra contains the free Lie algebra generated by a known sequence of elements $\sigma_{2j+1}$, after Brown \cite{Brown}.  It also comes with a natural filtration, the depth, and Schneps gave in \cite{Schneps} (after Ihara \cite{Ihara}) a complete classification of relations satisfied by $\{\sigma_{2j+1},\sigma_{2i+1}\}$ in the associated graded $\op{F}_2(\mathfrak{grt}_1)/\op{F}_3(\mathfrak{grt}_1)$.  Namely, such relations are indexed by even period polynomials or equivalently modular cusp forms of weight $2i+2j+2$. 

The second instance of the space of cusp forms arose in Lie graph homology.  Conant, Kassabov, Hatcher and Vogtmann in \cite{CHKV} related Lie graph homology in genus $g$ with $n$ legs to a group extension, denoted $\Gamma_{g,n}$, of $Out(F_g)$ by $(F_g)^{\times n}$, where $F_g$ is the free group on $g$ generators.  Studying the Leray-Serre spectral sequence in the case $g=2$, they related the cohomology of $\Gamma_{2,n}$ to the group cohomology of $Out(F_2)\cong GL_2(\mathbb{Z})$ with coefficients in $H^\ast(F_2^{\times n})$.  In the case that $n$ is even their results show that the anti-invariants (invariants after tensoring the sign representation) are supported on $H^1(GL_2(\mathbb{Z}), H^n(F_2^{\times n}))$, hence live in a single degree, and that they can be identified with the space of modular cusp forms of weight $n+2$.

In this paper we give a new chain model for $\mathfrak{grt}_1$ built from Lie graph homology which realizes the above mentioned relations between brackets of conjectural generators, modulo higher depth, by precisely these anti-invariants of $H^\ast(\Gamma_{2,n})$.  More broadly, the novel feature of this model is the way in which features of commutative graph homology are reflected in Lie graph homology of the opposite parity and vice versa.  This model is manifest as a family of chain complexes $\mathsf{L}(g,n)$ spanned by certain colored and labeled graphs, which we presently describe.

\subsection{Description of $\mathsf{L}(g,n)$.}   We give an overview of the construction of the chain complex $\mathsf{L}(g,n)$ and refer to Section $\ref{modelsec}$ below for details.  Consider connected graphs with two types of vertices, which we color red and black.  Red vertices are labeled with a {\it positive} integer, which we call the genus.  Black vertices are unlabeled.  Such graphs are required to satisfy the following conditions:
\begin{itemize}
	\item Black vertices are stable (have at least three adjacent half edges).
	\item Graphs can have parallel edges only if at least one of the adjacent vertices is red.  
	\item Graphs can have tadpoles only at red vertices.
	\item The number of red vertices is not zero.
\end{itemize}
Our graphs can have legs (aka leaves), and a graph with $n$ legs whose total genus (first Betti number plus the sum of the red vertex labels) is $g$ is said to be of type $(g,n)$.  See Figure $\ref{fig:graph2}$.

\begin{figure}
	\centering
	\includegraphics[scale=.7]{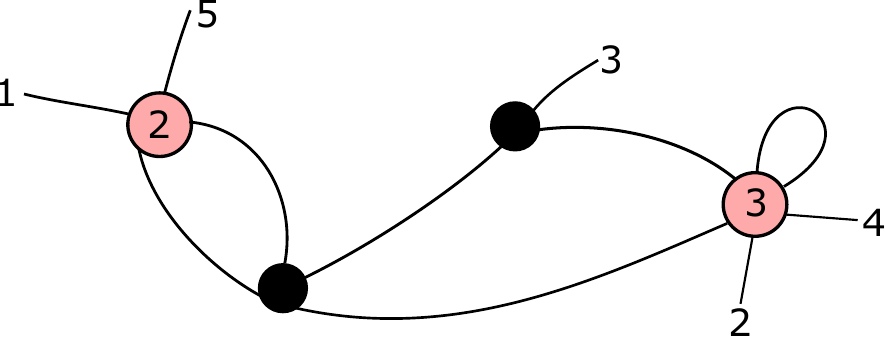}
	\caption{A graph of type $(8,5)$.}
	\label{fig:graph2}
\end{figure}

To each such graph $\gamma$ we associate a graded vector space by labeling the red vertices with the linear dual of the reduced homology of the appropriate $\Gamma_{g,n}$, i.e.\ we set
\begin{equation*}
\mathsf{L}(\gamma) =\bigotimes_{\substack{\text{red vertices} \\ \text{of } \gamma}} \widetilde{H}_\ast(\Gamma_{g(v),n(v)})^\ast\tensor \text{det}(E(\gamma)).
\end{equation*}
Here $\text{det}(E(\gamma))$ is the 1-dimensional top exterior power of the span of the edges of the graph, and has the effect of shifting degree and ordering the edges up to an even permutation.  %In particular the degree of a homogeneous vector is the number of edges of the graph plus the homological degrees of the classes at the red vertices.

For $g\geq 1$ we define the graded vector space $\mathsf{L}(g,n)$ to be the colimit of the functor $\mathsf{L}(-)$ from the category of such graphs and their isomorphisms.  Picking a representative of each isomorphism class, we find the more concrete description:
\begin{equation*}
\mathsf{L}(g,n) = \bigoplus_{\substack{\gamma \text{ of} \\  \text{type } (g,n)}} \mathsf{L}(\gamma)_{Aut(\gamma)}.
\end{equation*}
%where the automorphism group acts diagonally on the edges and the vertices.

It remains to describe the differential on each $\mathsf{L}(g,n)$.  Each complex carries a differential whose first term is induced by the classical Feynman transform differential (expanding one edge of the graph from a vertex of either color), but which includes higher order terms as well.  These higher order terms permit the vertex expansion of connected graphs with multiple edges, and occur only at the red vertices.  The expansion of a graph with $E$ edges must therefore index a higher homology operation on the family of graded vector spaces $H_\ast(\Gamma_{g,n})$ of degree $E-1$.  The existence of such homology operations is established by applying a version of the homotopy transfer theorem to the modular operad $H_\ast(\Gamma)$.  Although these operations depend on certain choices, much can be said about what must be true for any such choice, and this is the approach we take here-in.

\subsection{Summary of results}

Let $\mathsf{GC}^g_2$ be the commutative graph complex in genus $g$ with conventions as in \cite{WTw}.  In particular a graph in $\mathsf{GC}_2^g$ has no tadpoles, odd edges, and degree given by $E-2g$.  The differential is the sum over edge expansions.  Let $\mathsf{GC}^{g,n}_2$ be the analog of this graph complex whose graphs have $n$ ordered legs.

\begin{theorem}\label{introthm1}  Let $g\geq 1$.  There exists a short exact sequence of the form:
	\begin{equation}\label{ses1}
\Sigma^{2g}\mathsf{GC}^{g,n}_2 \hookrightarrow	\large{\substack{\text{acyclic} \\ \text{complex}}}\twoheadrightarrow\mathsf{L}(g,n).
	\end{equation}
\end{theorem}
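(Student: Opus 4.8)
The plan is to take the middle term of \eqref{ses1} to be the Feynman transform of Lie graph homology, $\mathsf{FT}(H_\ast(\Gamma))(g,n)$, equipped with the differential coming from the transferred homotopy-modular-operad structure on $H_\ast(\Gamma)$ supplied by the homotopy transfer theorem. Its generators are stable graphs whose vertices $v$ are decorated by $H_\ast(\Gamma_{g(v),n(v)})^\ast$ and twisted by $\det(E(\gamma))$, and the differential is a sum over vertex expansions, the binary (single-edge) term being the classical Feynman differential and the $k$-ary terms being dual to the transferred operations of arity $k$. First I would split every decoration as $H_\ast(\Gamma_{g(v),n(v)}) = H_0 \oplus \widetilde{H}_\ast(\Gamma_{g(v),n(v)})$ and color a vertex \emph{black} when it carries the degree-zero class and \emph{red} otherwise. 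Because the degree-zero part is the trivial commutative modular operad, whose transferred structure has only a binary operation, a black vertex expands only through the binary commutative cocomposition; matching degrees — the twist $\det(E(\gamma))$ places an all-black graph with $E$ edges in degree $E$, whereas $\mathsf{GC}^{g,n}_2$ lies in degree $E-2g$ — then identifies the all-black generators, with their induced differential, with $\Sigma^{2g}\mathsf{GC}^{g,n}_2$.

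Next I would extract the short exact sequence from the two-step filtration by vanishing versus nonvanishing number of red vertices. Since black vertices expand only to pairs of black vertices, the all-black subspace is a subcomplex isomorphic to $\Sigma^{2g}\mathsf{GC}^{g,n}_2$, and I would identify the quotient complex with $\mathsf{L}(g,n)$: its induced differential retains exactly the expansions that leave at least one red vertex, which is the $\mathsf{L}(g,n)$-differential built from the Feynman term together with the higher operations of degree $E-1$ at the red vertices. The extension is nonsplit precisely because the transferred operations can expand a single higher-genus red vertex into a configuration of black (genus-zero) vertices joined by edges of the matching genus; this red-to-black component of the differential is what the connecting homomorphism of the long exact sequence will record, and is the reason a model built from reduced Lie graph homology can compute commutative graph homology. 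Checking the agreement of the two differentials, including the signs from $\det(E(\gamma))$ and passage to $\mathrm{Aut}(\gamma)$-coinvariants, is routine once the transfer used to define $\mathsf{L}(g,n)$ is fixed.

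The main obstacle is acyclicity of the middle term, namely that $\mathsf{FT}(H_\ast(\Gamma))(g,n)$ is acyclic for $g\geq 1$; this is exactly the content of the paper's main technical result that the complex of tadpoles and higher-genus red vertices is contractible. The strategy I would follow is to filter $\mathsf{FT}(H_\ast(\Gamma))(g,n)$ by a statistic measured at the red vertices — total red genus together with the number of tadpoles — and, on the associated graded, build an explicit contracting homotopy from the insertion of a tadpole at a distinguished red vertex, so that $dh+hd$ is invertible; an equivalent route is bar--cobar duality, using that $H_\ast(\Gamma)$ with its transferred structure is quasi-isomorphic as a modular operad to a chain-level Feynman transform, so that $\mathsf{FT}\circ\mathsf{FT}\simeq \mathrm{id}$ collapses $\mathsf{FT}(H_\ast(\Gamma))$ to a modular operad that is trivial in positive genus. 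The delicate point, and where I expect the real work to lie rather than in the formal identification of sub and quotient, is that the higher operations are defined only up to the choices inherent in homotopy transfer, so the contracting homotopy — and hence the acyclicity — must be produced uniformly in, or shown to be independent of, those choices.
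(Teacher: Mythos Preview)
Your overall strategy --- take the weak Feynman transform $\mathsf{ft}(h_\ast(\Gamma))(g,n)$, split vertex labels into degree $0$ and positive degree, and use the two-step filtration by ``all black'' versus ``at least one red'' --- is close to the paper's, and your remark that acyclicity of the whole Feynman transform follows formally from bar--cobar duality (since $\Sigma^{-1}s\mathsf{Lie}$ vanishes in positive genus) is correct; that is the paper's Corollary~\ref{aacyclic}. However, your identification of the sub and quotient is wrong, and this is exactly where the paper's main technical lemma enters, not as a tool for proving acyclicity of the middle term.

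The all-black subcomplex of $\mathsf{ft}(h_\ast(\Gamma))(g,n)$ is $\mathsf{FT}(\overline{\mathsf{Com}})(g,n)$, because $H_0(\Gamma_{g,n})\cong k$ for \emph{every} stable $(g,n)$, not just $g=0$. This complex contains graphs with tadpoles and with higher-genus vertices carrying the degree-$0$ class, so it is strictly larger than $\Sigma^{2g}\mathsf{GC}^{g,n}_2$. Dually, the quotient by the all-black part is strictly larger than $\mathsf{L}(g,n)$: it still contains graphs with a positive-degree vertex together with a degree-$0$ genus-$>0$ vertex, or a tadpole at a degree-$0$ vertex --- exactly the configurations that the defining conditions of $\mathsf{L}(g,n)$ (black vertices have genus $0$, tadpoles only at red vertices) forbid. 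So your short exact sequence has the wrong ends.

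The paper's fix is to first pass to the quotient $\mathsf{ft}(h_\ast(\Gamma))(g,n)/Z(g,n)$, where $Z(g,n)$ is the span of graphs containing an \emph{egg}: a degree-$0$ vertex that either has positive genus or is adjacent to a tadpole. Lemma~\ref{acyclicZ} shows $Z(g,n)$ is an acyclic subcomplex, so this quotient is still acyclic. In this quotient the $s=0$ part now has no eggs, hence no higher-genus vertices and no tadpoles, and is precisely $\Sigma^{2g}\mathsf{GC}^{g,n}_2$; the $s>0$ part is, by definition, $\mathsf{L}(g,n)$. So the acyclic middle complex in the theorem is \emph{not} $\mathsf{ft}(h_\ast(\Gamma))(g,n)$ itself but its quotient by $Z(g,n)$, and the contractibility result you cite is used to pass to this quotient, not to establish acyclicity of the Feynman transform. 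Your filtration/homotopy argument on tadpoles is in the right spirit, but note that the eggs are \emph{black} (degree-$0$) vertices, not red ones; the contracting homotopy hatches a simple tadpole at such a vertex, and does not involve the transferred higher operations at all, so the issue of choice-independence you worry about does not arise at this step.
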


The main technical achievement which makes this theorem possible is an extension of \cite[Theorem 1.1(2)]{CGP2} from the Feynman transform of the commutative modular operad to the Feynman transform of Lie graph homology, which identifies an acyclic subcomplex of (certain) loops and higher genus vertices, see Lemma $\ref{acyclicZ}$.  Here we view Lie graph homology together with its associated higher homology operations and use an $A_\infty$ analog of the Feynman transform, and so the resulting quotient complex, which appears in the middle of the short exact sequence $\ref{ses1}$, is the quotient of two acyclic complexes and so is itself acyclic.

The maps in this short exact sequence of Equation $\ref{ses1}$ are natural and do not depend on any choices, hence the ensuing homology isomorphisms are natural as well.  For example, when $n=0$, passing to the long exact sequence in homology and applying Willwacher's result from \cite{WTw} we see:

\begin{corollary}  The connecting homomorphism associated to Equation $\ref{ses1}$ induces an isomorphism
	\begin{equation*}
	\bigoplus_{g\geq 3}H^{2g-1}(\mathsf{L}(g,0)) \cong \mathfrak{grt}_1.
	\end{equation*}
\end{corollary}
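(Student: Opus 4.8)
The plan is to feed the short exact sequence of Theorem \ref{introthm1} with $n=0$ into the long exact sequence in cohomology and cash in the acyclicity of the middle term. Writing the sequence as $0 \to \Sigma^{2g}\mathsf{GC}^{g,0}_2 \to A \to \mathsf{L}(g,0) \to 0$ with $A$ acyclic, the long exact sequence degenerates into isomorphisms, so that in every degree $k$ the connecting homomorphism
\[
\delta \colon H^{k}(\mathsf{L}(g,0)) \xrightarrow{\ \cong\ } H^{k+1}\!\left(\Sigma^{2g}\mathsf{GC}^{g,0}_2\right)
\]
is an isomorphism. This is the point at which the work of the preceding sections is being spent; granting the sequence itself, the corollary is essentially bookkeeping.

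The next step is to unwind the suspension. With the convention that $\Sigma^{2g}$ raises cohomological degree by $2g$, one has $H^{k+1}(\Sigma^{2g}\mathsf{GC}^{g,0}_2) = H^{k+1-2g}(\mathsf{GC}^{g}_2)$, and substituting $k = 2g-1$ lands on the degree-$0$ group
\[
H^{2g-1}(\mathsf{L}(g,0)) \cong H^{0}(\mathsf{GC}^{g}_2).
\]
I would then invoke Willwacher's theorem from \cite{WTw} in the form $\bigoplus_{g} H^0(\mathsf{GC}^g_2) \cong \mathfrak{grt}_1$ and assemble the isomorphisms over all $g$.

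To reconcile the indexing I would record a short valence count showing that $H^0(\mathsf{GC}^g_2)$ vanishes for $g < 3$. A degree-$0$ graph has $E = 2g$ edges, and being connected of genus $g$ it has $V = g+1$ vertices; stability ($2E \geq 3V$) then forces $4g \geq 3g + 3$, i.e.\ $g \geq 3$, so the chain groups—and hence the cohomology—are zero below genus $3$. Consequently $\bigoplus_{g}H^0(\mathsf{GC}^g_2) = \bigoplus_{g\geq 3}H^0(\mathsf{GC}^g_2)$, and combining this with the isomorphisms above yields $\bigoplus_{g\geq 3}H^{2g-1}(\mathsf{L}(g,0)) \cong \mathfrak{grt}_1$.

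The step most deserving of care is not any single deduction but the consistent tracking of the degree and sign conventions through the suspension $\Sigma^{2g}$ and the connecting map, together with confirming that Willwacher's identification is stated for precisely the ($n=0$) graph complex $\mathsf{GC}_2$ appearing here and in the same normalization $E - 2g$. Once these conventions are pinned down the argument is formal, since the genuine homotopical content lives entirely in Theorem \ref{introthm1} and the acyclicity of its middle complex.
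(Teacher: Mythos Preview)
Your proof is correct and follows essentially the same route as the paper: pass to the long exact sequence, use acyclicity of the middle term to see the connecting map is an isomorphism, unwind the $\Sigma^{2g}$ shift, and invoke Willwacher's identification of $H^0(\mathsf{GC}_2)$ with $\mathfrak{grt}_1$. Your explicit valence count explaining why the sum can be restricted to $g\geq 3$ is a detail the paper leaves implicit.
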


Alternatively, when $n$ is not necessarily $0$, passing to the long exact sequence and applying results of Chan, Galatius and Payne \cite{CGP},\cite{CGP2} we see:
\begin{corollary}  Let $g>0$ and $i\geq 0$.  The connecting homomorphism associated to Equation $\ref{ses1}$ induces an isomorphism of $S_n$-modules
	\begin{equation*}
	H^{i}(\mathsf{L}(g,n)) \cong H^{i+1}(\Delta_{g,n}).
	\end{equation*}
\end{corollary}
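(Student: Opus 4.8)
The plan is to read the isomorphism off directly from the long exact sequence attached to the short exact sequence of Theorem~\ref{introthm1}, and then to translate the resulting graph-complex cohomology into the cohomology of $\Delta_{g,n}$ by invoking the comparison theorem of Chan--Galatius--Payne. Write \eqref{ses1} as $\Sigma^{2g}\mathsf{GC}^{g,n}_2 \hookrightarrow B \twoheadrightarrow \mathsf{L}(g,n)$ with $B$ the acyclic middle term. First I would pass to the long exact sequence associated to this short exact sequence of cochain complexes. Since $H^\ast(B)=0$, every connecting homomorphism is forced to be an isomorphism, giving
\begin{equation*}
H^{i}(\mathsf{L}(g,n)) \xrightarrow{\ \cong\ } H^{i+1}\big(\Sigma^{2g}\mathsf{GC}^{g,n}_2\big) = H^{i+1-2g}\big(\mathsf{GC}^{g,n}_2\big).
\end{equation*}
This already yields the isomorphism of graded vector spaces; what remains is to promote it to $S_n$-modules and to identify the target with $H^{i+1}(\Delta_{g,n})$.

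Second I would verify $S_n$-equivariance. The symmetric group acts on all three complexes by permuting the $n$ legs, and—as emphasized in the excerpt immediately following \eqref{ses1}—the maps in the short exact sequence are natural and independent of the auxiliary choices entering the higher homology operations in the differential on $\mathsf{L}(g,n)$. Hence \eqref{ses1} is a short exact sequence of complexes of $S_n$-modules, the long exact sequence is a sequence of $S_n$-modules, and in particular the connecting homomorphism is $S_n$-equivariant. This upgrades the displayed isomorphism to one of $S_n$-modules.

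Third I would identify $H^{i+1-2g}(\mathsf{GC}^{g,n}_2)$ with $H^{i+1}(\Delta_{g,n})$ using \cite{CGP},\cite{CGP2}, which identify the commutative graph complex with legs—equivalently the Feynman transform of the commutative modular operad—with the reduced rational (co)homology of the tropical moduli space $\Delta_{g,n}$, compatibly with the $S_n$-action. Because $\mathsf{GC}^{g,n}_2$ carries the edge-expansion differential whereas the simplicial chains of $\Delta_{g,n}$ carry the dual edge-contraction differential, this step records two bookkeeping facts: a duality (over $\mathbb{Q}$, where $S_n$-representations are self-dual, passing between the cohomology of the expansion complex and the homology of the contraction complex), and a degree shift coming from the fact that a graph with $E$ edges indexes an $(E-1)$-simplex of $\Delta_{g,n}$. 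Combining the suspension $\Sigma^{2g}$ built into \eqref{ses1} with this internal shift of the CGP comparison is what produces the index $i+1$ appearing on the $\Delta_{g,n}$ side, and $S_n$-equivariance of the CGP isomorphism carries the identification through as a map of $S_n$-modules.

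The genuine content is entirely contained in Theorem~\ref{introthm1}: once the acyclic middle term is in hand, the corollary is a formal consequence of the long exact sequence together with the cited structural result of \cite{CGP},\cite{CGP2}. Accordingly I expect the only delicate point to be the bookkeeping of the third step—correctly aligning the $E-2g$ grading on $\mathsf{GC}^{g,n}_2$, the suspension $\Sigma^{2g}$, and the simplicial grading on $\Delta_{g,n}$, and confirming that the CGP comparison is $S_n$-equivariant so that equivariance is preserved all the way to the stated identification. Everything else is an immediate diagram chase.
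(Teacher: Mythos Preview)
Your approach is essentially the same as the paper's: pass to the long exact sequence, use acyclicity of the middle term to get the connecting isomorphism onto $H^\ast(\Sigma^{2g}\mathsf{GC}_2^{g,n})$, and then invoke \cite{CGP2} to reach $H^\ast(\Delta_{g,n})$; the $S_n$-equivariance is handled exactly as you say.

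One point of imprecision in your third step is worth correcting. There is no duality or passage from an expansion to a contraction differential involved. In the paper's conventions, $\mathsf{FT}(\overline{\mathsf{Com}})(g,n)$ already \emph{is} (a shift of) the reduced cochain complex of $\Delta_{g,n}$, with the expansion differential, and $\Sigma^{2g}\mathsf{GC}_2^{g,n}$ is the \emph{quotient} of $\mathsf{FT}(\overline{\mathsf{Com}})(g,n)$ by the subcomplex of graphs with a tadpole or a positive-genus vertex (the ``eggs'' in the $s=0$ part). The specific input from \cite{CGP2} is Theorem~1.1(2) there: this egg subcomplex is acyclic, so the quotient map is a quasi-isomorphism. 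With that correction the degree bookkeeping is immediate (no extra $E-1$ simplicial shift to chase), and your argument matches the paper's proof of Theorem~\ref{gnthm}.
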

Here $\Delta_{g,n}$ denotes the moduli space of tropical curves of unit volume (op.cit.).

Given these isomorphisms, it is natural to ask how known calculations and structures on the right hands sides are calculated or encoded on the left hand sides.  For example, the wheel graph in $(\mathsf{GC}^{2j+1}_2)^\ast$ represents a non-trivial class in homology.  A representative for this homology class in $\mathsf{L}(2j+1,0)^\ast$ can be read off by combining the results of \cite{WardWheels} with Lemma $\ref{acyclicZ}$ below.  Namely, it's supported on a bouquet of $2j-2$ circles at a genus $1$ vertex glued to a trivalent corolla of genus $0$, see Figure $\ref{s5}$. The genus $1$ vertex is labeled by a generator of the 1-dimensional space $H_{2j}(\Gamma_{1,2j+1})_{\Lambda_{2j+1}}$, where we write $V_{\Lambda_n}$ for the anti-invariants of an $S_n$ representation $V$, i.e.\ for $V\tensor_{S_n} \text{sgn}_n$.
 
 \begin{figure}
 	\centering
 	\includegraphics[scale=.44]{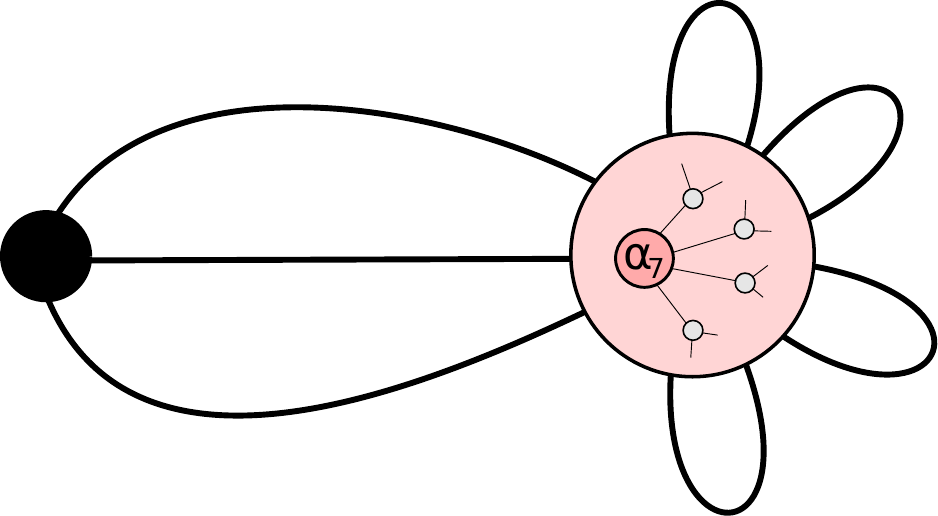}
 	\caption{A cycle in $\mathsf{L}(7,0)^\ast$ detecting the wheel graph is constructed using the generator $\alpha_7\in H_6(\Gamma_{1,7})$ composed with four copies of the commutative product in $H_0(\Gamma_{0,3})$.}
 	\label{s5}
 \end{figure}
 
In particular, the wheel graph can be detected by Lie graph homology classes in genus $1$.   It is therefore natural to ask if  the space of bracketings of conjectural generators lives in genus $2$, and indeed we prove:

\begin{theorem}\label{2thm}  There is an isomorphism
\begin{equation}\label{iso2}
\bigoplus_{\text{even } n\geq 4} H^{\ast}(\mathsf{L}(2,n)_{\Lambda_n}) \cong \op{F}_2(
\mathfrak{grt}_1)/\op{F}_3(
\mathfrak{grt}_1)
\end{equation}
induced by 
\begin{equation}\label{mapp}
\includegraphics[scale=.5]{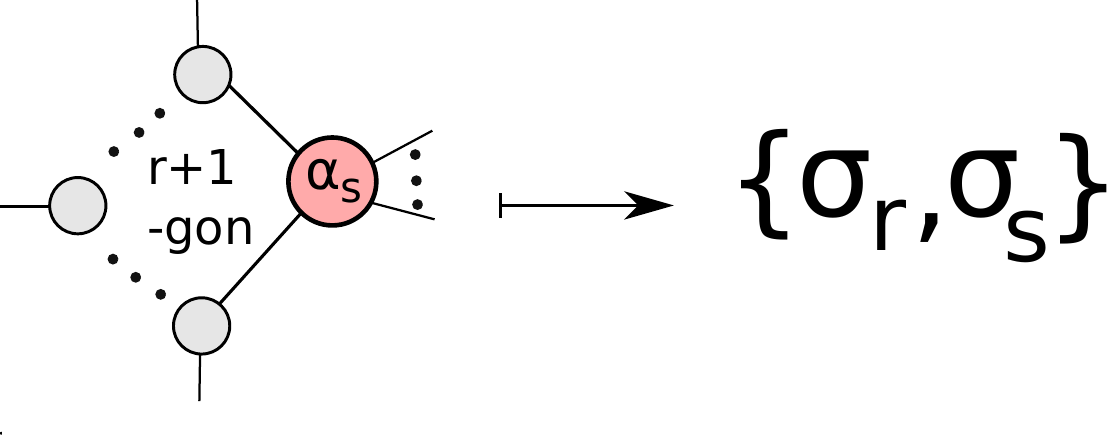}\\
\end{equation}
\end{theorem}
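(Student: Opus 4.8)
The plan is to realize the isomorphism \ref{iso2} as the map \ref{mapp} and to verify it by reducing each complex $\mathsf{L}(2,n)_{\Lambda_n}$ to a two-term complex whose cohomology is controlled by the genus-$1$ and genus-$2$ Lie graph homology of \cite{CHKV}. Since we work over $\mathbb{Q}$, the anti-invariant functor $(-)_{\Lambda_n}$ is exact and commutes with homology, so it suffices to treat each $n$ (equivalently each weight $n+2$) separately and to show that $H^\ast(\mathsf{L}(2,n))_{\Lambda_n}$ is the depth-$2$, weight-$(n+2)$ summand of $\op{F}_2(\mathfrak{grt}_1)/\op{F}_3(\mathfrak{grt}_1)$.

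First I would make the map \ref{mapp} precise. A bracket $\{\sigma_{2i+1},\sigma_{2j+1}\}$, with $2i+2j=n$, is sent to the class of the graph carrying two genus-$1$ red vertices joined along a single edge (one marked point of each vertex), so that the remaining $2i+2j=n$ marked points are the legs, and with the two vertices labeled by the generators $\alpha_{2i+1}\in\widetilde H_\ast(\Gamma_{1,2i+1})_{\Lambda_{2i+1}}$ and $\alpha_{2j+1}\in\widetilde H_\ast(\Gamma_{1,2j+1})_{\Lambda_{2j+1}}$ detecting the odd wheels (the genus-$1$ building block of Figure \ref{s5}). The factor $\det(E(\gamma))$ and the sign twist make this assignment antisymmetric under exchanging the two vertices, matching the antisymmetry of the bracket, and I would check that each such graph is a cycle for the full higher differential. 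The identification of $\alpha_{2j+1}$ with the generator $\sigma_{2j+1}$ follows by combining Willwacher's theorem \cite{WTw} with \cite{WardWheels} and Lemma \ref{acyclicZ}, so that gluing two genus-$1$ vertices along an edge represents their bracket in $\mathfrak{grt}_1$.

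The heart of the argument is the reduction and the relation matching. Filtering $\mathsf{L}(2,n)_{\Lambda_n}$ by the number of red vertices and discarding the acyclic contributions of black (genus-$0$) vertices and of tadpoles -- the Lie graph homology analogue of the contractibility isolated in Lemma \ref{acyclicZ} and \cite[Theorem 1.1(2)]{CGP2} -- I expect the anti-invariant cohomology to be computed by the two-term complex with the single genus-$2$ red vertex, labeled by $\big(\widetilde H_\ast(\Gamma_{2,n})^\ast\big)_{\Lambda_n}$, in one degree and the two-genus-$1$-vertex brackets in the adjacent degree. The differential is the vertex expansion splitting the genus-$2$ vertex into a pair of genus-$1$ vertices; it indexes a higher homology operation on the family $H_\ast(\Gamma)$, and the crux is to identify its anti-invariant image with the span of the Ihara-Schneps relations. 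Here the two occurrences of cusp forms are unified: by \cite{CHKV} the source $\big(\widetilde H_\ast(\Gamma_{2,n})^\ast\big)_{\Lambda_n}$ is the space of modular cusp forms of weight $n+2$, while by \cite{Ihara},\cite{Schneps} the relations among the $\{\sigma_{2i+1},\sigma_{2j+1}\}$ in $\op{F}_2(\mathfrak{grt}_1)/\op{F}_3(\mathfrak{grt}_1)$ are indexed by the even period polynomials of the same weight, and I would show that the vertex-expansion differential carries the former isomorphically onto the latter.

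Granting this matching, the cokernel of the genus-$2$ boundary is exactly the span of brackets $\{\sigma_{2i+1},\sigma_{2j+1}\}$ modulo the cusp-form relations, which is $\op{F}_2(\mathfrak{grt}_1)/\op{F}_3(\mathfrak{grt}_1)$ by the classification of depth-$2$ relations in \cite{Schneps}; since \ref{mapp} sends a bracket to the corresponding two-genus-$1$-vertex class, it induces precisely the isomorphism \ref{iso2} upon summing over even $n\geq 4$. As an independent check on the target I would pass through the Corollary identifying $H^i(\mathsf{L}(g,n))$ with $H^{i+1}(\Delta_{g,n})$, comparing the anti-invariant top-weight cohomology $H^{\ast+1}(\Delta_{2,n})_{\Lambda_n}$ of \cite{CGP},\cite{CGP2} with the depth-$2$ dimension count of \cite{Schneps}. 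The principal obstacle is the relation-matching step: it requires computing the transferred $A_\infty$-operations that define the vertex-expansion differential, proving that the resulting anti-invariant relations reproduce the period-polynomial coefficients exactly, and checking that the answer is independent of the choices made in the homotopy transfer -- the last point being guaranteed in principle by the naturality of the maps in Theorem \ref{introthm1}.
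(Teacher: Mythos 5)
Your construction breaks at the very first step: the graphs you propose to represent the brackets are in the wrong degree, and are not cycles. A graph with two genus-$1$ red vertices joined by a single edge, labeled by the top classes $\alpha_{2i+1}^\ast\tensor\alpha_{2j+1}^\ast$, has bidegree $(r,s)=(1,2i+2j)$ in the bigrading of Section \ref{modelsec}, hence total degree $n+1$; but $H^{\ast}(\mathsf{L}(2,n)_{\Lambda_n})$ is concentrated in degree $n+2$ (Theorem \ref{gnthm} combined with \cite[Theorem 6.2]{CCTW}, cf.\ Equation \ref{isos}), so even if your graphs were cycles they would be exact, and the map $\ref{mapp}$ as you define it would vanish on cohomology. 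In fact they are not cycles: by Corollary \ref{dalpha} the differential expands each $\alpha^\ast$-labeled genus-$1$ vertex into an antisymmetrized odd polygon grafted to the other vertex, and these terms contain no eggs, so they survive the quotient by $Z(2,n)$. The actual generators are the classes $\nu_{r,s}$ of Equation \ref{nu}, supported on $P_{r,s}$: an $(r+1)$-gon of genus-$0$ vertices with a \emph{single} genus-$1$ vertex carrying $s-2$ legs, sitting in bidegree $(r+1,s-1)$ and total degree $n+2$, and cycles by Lemma \ref{upper}. The same degree count breaks your proposed two-term complex: the anti-invariant genus-$2$ corolla sits in degree $n+1$ by \cite[Theorem 2.10]{CHKV}, and the term of its differential given by a one-edge splitting into two genus-$1$ vertices would need vertex labels of total degree $n+1$, which exceeds the maximum $2i+2j=n$ allowed by Lemma \ref{Lieg1}; that term is identically zero. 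The connecting map out of the corollas instead lands on multi-edge (Massey product) expansions, i.e.\ in the span of the polygon classes $[\nu_{r,s}]$ --- this is the short exact sequence \ref{ses}, which in the paper is a \emph{consequence} of the theorem, not a tool for proving it.

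Second, the step you yourself identify as the crux --- showing the image of the genus-$2$ expansion coincides with the Ihara--Schneps relations --- is left as a hope, and computing it directly from the transferred $A_\infty$-structure is precisely what the paper is organized to avoid, since those higher operations are never computed explicitly and depend on choices. The paper instead proves Theorem \ref{g2thm} by a chain-level computation (Proposition \ref{chain}) of the image of $\nu_{r,s}$ under the connecting isomorphism of Theorem \ref{gnthm}, identifying $-[\nu_{r,s}]/(s-2)!$ with the theta graph $[0,r-1,s-1]^0$ in the complex $\Theta_n$ of \cite{CCTW}, and then applying M.~Felder's isomorphism \cite{MFelder}, which sends theta classes to the brackets $\{\sigma_r,\sigma_s\}$. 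Felder's theorem is the indispensable bridge from graph classes to depth-$2$ elements of $\mathfrak{grt}_1$: your assertion that gluing two genus-$1$ vertices along an edge ``represents their bracket'' has no justification, since Willwacher's theorem together with \cite{WardWheels} identifies wheel classes with classes in $\mathsf{L}(2j+1,0)$ but provides no mechanism relating an edge in a Lie graph homology graph to a Lie bracket in $\mathfrak{grt}_1$. Without Proposition \ref{chain} (or a replacement for it) and without \cite{MFelder}, your outline does not produce the isomorphism.
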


See Theorem $\ref{g2thm}$ below for a precise fixing of conventions.  
The heart of the proof of this Theorem is the computation of a basis for the homology of the subcomplex of non-corollas of $\mathsf{L}(2,n)_{\Lambda_n}$.  We then compute the image of each homology class under the connecting isomorphism and identify the result (up to a straight-forward parity shift) with the image of the isomorphism of M. Felder in \cite{MFelder}.  This result shows that any coherent choice of higher operations determines an isomorphism between $H_\ast(\Gamma_{2,n})^\ast$ and the linear relations among the brackets in $(\op{F}_2(\mathfrak{grt}_1)/\op{F}_3(\mathfrak{grt}_1))_{n+2}$. 

One consequence of our computations is that for $n\geq 4$ and even:
	\begin{equation}
	dim(H^{i+2}(\Delta_{2,n})_{\Lambda_n}) + dim(H^i(\Gamma_{2,n})_{\Lambda_n}) = \begin{cases}
	\floor{\frac{n-2}{4}} & \text{ if } i=n+1 \\ 
	0 & \text{else.}
	\end{cases}
	\end{equation}
I.e.\  $H^{\ast}(\Delta_{2,n})_{\Lambda_n}$, computed in \cite[Theorem 6.2]{CCTW} and $H^\ast(\Gamma_{2,n})_{\Lambda_n}$, computed in \cite[Theorem 2.10]{CHKV} are formal consequences of each other. It would be desirable to extend this relationship beyond just the anti-invariants.

Theorem $\ref{2thm}$ relates genus $2$ of Lie graph homology to depth $2$ of $\mathfrak{grt}_1$, and one may ask if some such relationship continues beyond $2$.  One way to address this question would be to probe the leg-free graph complex by gluing in a (black) $n$-corolla to the $n$ anti-invariant legs of the graphs supporting elements in $\mathsf{L}(g,n)_{\Lambda_n}$.  Such an assignment would not be dg on the nose, however for $n \geq g$ it is possible to filter the result by the largest valence vertex and hence to get a dg map to the associated graded.  The associated spectral sequence comparison between these two objects may be deserving of future study.

\tableofcontents

\section{Background}

In this section we give a brief overview of the necessary background material.  For more detail we refer to \cite{GeK2} for modular operads, \cite{WardMP} for weak modular operads, and \cite{KontSymp},\cite{WTw} and \cite{CHKV} and the references there-in for graph homology.

Throughout, we work over a field $k$ of characteristic zero and use cohomological grading conventions, except for when expressly working with homology.  Note that \cite{WardMP} used homological conventions.

\subsection{Modular graphs and modular operads}  

In this paper, a graph $\gamma$ refers to a nonempty set of vertices $V$ and a set of half-edges $H$, along with an adjacency map $a\colon H \to V$, which specifies the vertex to which a half edge is adjacent and an involution $H\to H$.  The orbits of the involution of size two are called edges, and the fixed points are called legs.  A modular graph is a connected graph along with a bijection $\{1\cdc n\}\cong \text{legs}(\gamma)$ and a genus labeling of the vertices $g\colon V\to \mathbb{Z}_{\geq 0}$, such that the stability condition $2g(v)+a^{-1}(v) \geq 3$ is satisfied $\forall \ v\in V$.

A (dg) modular operad \cite{GeK2} is, roughly, a collection of chain complexes $\op{M}(g,n)$ along with an equivariant composition map corresponding to each modular graph.  In particular if $\gamma$ is a modular graph with total genus $g:= \beta_1(\gamma) + \sum_{V(\gamma)} g(v)$ and $n$ legs, there is a contraction map 
$$\op{M}(\gamma):=\tensor_{V(\gamma)} \op{M}(g(v), a^{-1}(v))\stackrel{\mu_\gamma}\to \op{M}(g,n)$$
 which is invariant with respect to the action of leg fixing automorphisms of the graph and equivariant with respect to the symmetric group action on the leg labels.  A necessary variant of modular operads defines the composition maps corresponding to graphs whose edges are given degree $1$.  These are called $\mathfrak{K}$-modular operads.  More specifically if we define $\mathfrak{K}(\gamma)$ to be the top exterior power of the set $E(\gamma)$ of edges of the graph $\gamma$, concentrated in degree $|E(\gamma)|$, then a $\mathfrak{K}$-modular operad has equivariant composition maps $\op{M}(\gamma)\tensor \mathfrak{K}(\gamma)\to \op{M}(g,n)$.

Given a dg modular operad $A$, its Feynman transform is the $\mathfrak{K}$-modular operad defined by chain complexes:
\begin{equation}\label{FT}
	\FTGK(A)(g,n):= \bigoplus A(\gamma)^\ast\tensor_{Aut(\gamma)} \mathfrak{K}(\gamma),
\end{equation}
with direct sum taken over isomorphism classes of modular graphs of total genus $g$ and with $n$ legs. The $\mathfrak{K}$-twisted modular operadic compositions are free, given by grafting legs.  The differential is given by the linear dual of the sum of the one-edged contraction maps.  This differential squares to zero because the contraction of two edges in either order produces terms which add to $0$ in the $\mathfrak{K}$ factor. Similarly, given a $\mathfrak{K}$-modular operad $\op{B}$, its Feynman transform is the free untwisted modular operad.  It has an analogous differential, which in this case is square zero because the original contractions were $\mathfrak{K}$-twisted.

By an ``homogeneous element'' in the Feynman transform we refer to a pure tensor supported on a single summand.  For a summand corresponding to a graph $\gamma$, we refer to $\gamma$ as the graph underlying the elements. %We may occasionally abuse terminology by saying $\gamma$ ``is'' the element in the Feynman transform.

\subsection{Recollection of Lie graph homology}
Recall that a cyclic operad is equivalent to a modular operad whose higher genus terms are zero. One such example of a cyclic operad is the Lie operad $\mathsf{Lie}$ whose underlying operad encodes Lie algebras.  Any cyclic operad also determines a $\mathfrak{K}$-twisted modular operad by taking its operadic suspension, along with an additional downward shift in degree, and then extending by $0$ in higher genus.  We thus have the $\mathfrak{K}$-twisted modular operad $\Sigma^{-1} s \mathsf{Lie}$.  In particular $(\Sigma^{-1} s \mathsf{Lie})(g,n)$ is zero unless $g=0$, in which case it is concentrated in degree $n-3$.  Observe that $n$ here denotes the total number of legs not just the inputs.

By Lie graph homology we mean the modular operad $H_\ast(\FTGK(\Sigma^{-1} s \mathsf{Lie}))$.  In \cite{CHKV}, Conant, Hatcher, Kassabov and Vogtmann study a family of groups $\Gamma_{g,n}$ whose homologies form a modular operad such that $H_\ast(\Gamma)\cong H_\ast(\FTGK(\Sigma^{-1} s \mathsf{Lie}))$, where $H_\ast(\Gamma)(g,n):=H_\ast(\Gamma_{g,n})$.  We will use the following subset of their results:

\begin{lemma}\cite[Proposition 2.7]{CHKV}\label{Lieg1}  The $S_n$ representation $H_i(\Gamma_{1,n})$ is zero, unless $n-1 \geq i \geq 0$ and $i$ is even, in which case $H_i(\Gamma_{1,n}) \cong V_{n-i,1^i}$. 
\end{lemma}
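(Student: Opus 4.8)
The plan is to realize $\Gamma_{1,n}$ as a group extension with abelian kernel and then run the Lyndon--Hochschild--Serre spectral sequence, exploiting the characteristic-zero hypothesis to force an immediate collapse. Following \cite{CHKV}, genus one corresponds to a single loop carrying $n$ marked legs, and the group $\Gamma_{1,n}$ fits into a short exact sequence
\begin{equation*}
1 \to A \to \Gamma_{1,n} \to Out(F_1) \to 1.
\end{equation*}
Here $Out(F_1)=Aut(\mathbb{Z})\cong \mathbb{Z}/2$, while the kernel records the leg translations: it is $(F_1)^{\times n}=\mathbb{Z}^n$ reduced by the diagonal redundancy coming from an overall translation of the loop, so that $A\cong \mathbb{Z}^n/\mathbb{Z}_{\mathrm{diag}}$ has rank $n-1$, with $S_n$ permuting the legs. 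The nontrivial element of $\mathbb{Z}/2$ acts on $F_1=\mathbb{Z}$ by $-1$, hence diagonally by $-1$ on $\mathbb{Z}^n$ and therefore by the scalar $-1$ on $A$. Isolating this extension --- in particular verifying that the relevant kernel has rank $n-1$ rather than $n$ and carries the standard representation --- is the step I expect to be the main obstacle, since it is precisely the low-genus input one must import from \cite{CHKV}; the degeneration $Inn(F_1)=1$ is what distinguishes $g=1$ from the generic case, and the case $n=1$ (where $A=0$ and $\Gamma_{1,1}=\mathbb{Z}/2$) is a useful consistency check. Everything downstream is formal.

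Granting the extension, first I would compute the homology of the kernel: as $A$ is free abelian, $H_\ast(A;k)=\Lambda^\ast(A\otimes k)$, and over $k$ the $S_n$-representation $A\otimes k$ is the standard representation $V_{n-1,1}$. Since $\mathrm{char}\,k=0$ and $|\mathbb{Z}/2|=2$ is invertible, the group $\mathbb{Z}/2$ has vanishing homology in positive degrees with arbitrary coefficients; hence the spectral sequence
\begin{equation*}
E^2_{p,q}=H_p\!\big(\mathbb{Z}/2;\,\Lambda^q V_{n-1,1}\big)\ \Longrightarrow\ H_{p+q}(\Gamma_{1,n};k)
\end{equation*}
is concentrated on the row $p=0$ and collapses, producing a natural isomorphism of $S_n$-modules $H_i(\Gamma_{1,n};k)\cong \big(\Lambda^i V_{n-1,1}\big)^{\mathbb{Z}/2}$.

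It remains to evaluate these invariants and identify them. The generator of $\mathbb{Z}/2$ acts as $-1$ on $A\otimes k$, hence as the scalar $(-1)^i$ on $\Lambda^i$; being central it commutes with the $S_n$-action, so the invariant subspace is all of $\Lambda^i V_{n-1,1}$ when $i$ is even and is zero when $i$ is odd. This yields the vanishing in odd degrees. Finally I would invoke the classical fact that the exterior powers of the reflection representation are hooks, $\Lambda^i V_{n-1,1}\cong V_{n-i,1^i}$, which is nonzero exactly for $0\le i\le n-1$; this simultaneously identifies the $S_n$-module structure and produces the asserted range $n-1\ge i\ge 0$. Assembling these steps recovers the statement, matching \cite[Proposition 2.7]{CHKV}.
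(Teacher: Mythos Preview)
The paper does not prove this lemma; it is simply quoted from \cite[Proposition 2.7]{CHKV} and used as a black box. So there is no ``paper's own proof'' to compare against.

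Your sketch is essentially the argument CHKV gives, and the formal steps downstream (collapse of the LHS spectral sequence over characteristic zero because $\mathbb{Z}/2$ has no higher rational homology; the $(-1)^i$ parity giving the even/odd dichotomy; $\Lambda^i V_{n-1,1}\cong V_{n-i,1^i}$) are all correct and standard. Your own flag on the shape of the extension is well placed: note that the paper's introductory description of $\Gamma_{g,n}$ as an extension of $Out(F_g)$ by $(F_g)^{\times n}$ is a loose paraphrase; taken literally for $g=1$ it would give a kernel of rank $n$ and produce an extra hook $V_{n-i+1,1^{i-1}}$ in each even degree (already visible at $n=2$, where it would yield $H_2(\Gamma_{1,2})\neq 0$, contradicting the lemma). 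The quotient by the diagonal translation that you insert is exactly what is needed to match the CHKV statement, and your $n=1$ sanity check ($A=0$, $\Gamma_{1,1}=\mathbb{Z}/2$) is consistent. So the proposal is sound, with the understanding that the precise rank-$(n-1)$ identification of the kernel is being imported from \cite{CHKV} rather than derived here.
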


\begin{lemma}\cite[Theorem 2.10]{CHKV}  Let $n$ be even.  The number of copies of the alternating representation appearing in the $S_n$-module $H_i(\Gamma_{2,n})$ is $0$ unless $i=n+1$ in which case it is $\floor{\frac{n-2}{4}} - \floor{\frac{n}{6}}$.
\end{lemma}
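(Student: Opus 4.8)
The plan is to run the Lyndon--Hochschild--Serre spectral sequence of the extension $1 \to F_2^{\times n} \to \Gamma_{2,n} \to Out(F_2)\to 1$ described in the introduction and to extract its $\mathrm{sgn}_n$-isotypic part. Since $Out(F_2)\cong GL_2(\mathbb{Z})$ and, over a field $k$ of characteristic zero, $H_i(\Gamma_{2,n})$ and $H^i(\Gamma_{2,n})$ are dual $S_n$-representations with $\mathrm{sgn}_n$ self-dual, it suffices to compute the multiplicity of $\mathrm{sgn}_n$ in the cohomology $H^\ast(\Gamma_{2,n})$. The $S_n$-action permuting the $n$ free-group factors commutes with the $GL_2(\mathbb{Z})$-action, so passing to anti-invariants yields a spectral sequence
\[
(E_2^{p,q})_{\Lambda_n} = H^p\!\left(GL_2(\mathbb{Z});\, H^q(F_2^{\times n})_{\Lambda_n}\right) \Rightarrow H^{p+q}(\Gamma_{2,n})_{\Lambda_n}.
\]

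First I would compute the coefficient modules $H^q(F_2^{\times n})_{\Lambda_n}$. Writing $W := H^1(F_2)\cong k^2$ for the standard representation of $GL_2(\mathbb{Z})$ and using that $F_2$ is free (so $H^0(F_2)=k$, $H^1(F_2)=W$, $H^{\geq 2}(F_2)=0$), the Künneth theorem presents $H^q(F_2^{\times n})$ as the induced module $\mathrm{Ind}_{S_q\times S_{n-q}}^{S_n}(W^{\otimes q}\boxtimes k)$, with $S_q$ permuting the $W$-factors. The crucial point is that the classes in $H^1$ are odd, so the Künneth isomorphism is $S_n$-equivariant only up to Koszul signs: transposing two degree-one factors introduces a sign. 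Hence the genuine $S_q$-action on $W^{\otimes q}$ is the permutation action twisted by $\mathrm{sgn}_q$, and Frobenius reciprocity identifies the anti-invariants with $(W^{\otimes q})^{S_q}\otimes \mathrm{Hom}_{S_{n-q}}(\mathrm{sgn}_{n-q},k) = \mathrm{Sym}^q W \otimes \mathrm{Hom}_{S_{n-q}}(\mathrm{sgn}_{n-q},k)$. The antisymmetrisation over the odd factors thus produces symmetric rather than exterior powers, while the trivial $H^0$-factors force $n-q\leq 1$. Therefore only $q=n$ (giving $\mathrm{Sym}^n W$) and $q=n-1$ (giving $\mathrm{Sym}^{n-1}W$, up to a determinant twist) contribute.

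Next I would collapse the spectral sequence. As $GL_2(\mathbb{Z})$ has virtual cohomological dimension one and $k$ has characteristic zero, $H^p$ vanishes for $p\geq 2$; and since $\mathrm{Sym}^m W$ is a nontrivial irreducible $SL_2$-module for $m\geq 1$ it has no invariants, so $H^0$ vanishes for $m\geq 1$. For $q=n-1$ the module $\mathrm{Sym}^{n-1}W$ has $-I$ acting by $(-1)^{n-1}=-1$ (as $n$ is even), regardless of any determinant twist, so its $GL_2(\mathbb{Z})$-cohomology vanishes. The only surviving entry is $(p,q)=(1,n)$, which forces $H^\ast(\Gamma_{2,n})_{\Lambda_n}$ to be concentrated in degree $n+1$ with no possible differentials or extension problems, giving
\[
H^{n+1}(\Gamma_{2,n})_{\Lambda_n}\cong H^1\!\left(GL_2(\mathbb{Z});\, \mathrm{Sym}^n W\right)
\]
up to a determinant twist. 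Finally, the Eichler--Shimura isomorphism computes $H^1(SL_2(\mathbb{Z});\mathrm{Sym}^n W)$ as two copies of the cusp forms $S_{n+2}$ together with a one-dimensional Eisenstein line; passing to $GL_2(\mathbb{Z})$-cohomology (equivalently, the invariants under the order-two group generated by an element of determinant $-1$) selects the cuspidal eigenspace and discards the Eisenstein line, so the dimension is $\dim S_{n+2}$. An elementary manipulation of the classical dimension formula for cusp forms shows $\dim S_{n+2}=\floor{\frac{n-2}{4}}-\floor{\frac{n}{6}}$ for even $n\geq 4$, which is the claim.

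I expect the main obstacle to be the sign and determinant-twist bookkeeping of the middle two steps: verifying that the odd degree of $H^1(F_2)$ genuinely converts antisymmetrisation into symmetrisation, so that $\mathrm{Sym}$ rather than $\Lambda$ appears (without which the anti-invariants would vanish for $n\geq 4$), and pinning down the determinant twist precisely enough to guarantee that the surviving Eichler--Shimura eigenspace is the cuspidal one, with the Eisenstein contribution removed.
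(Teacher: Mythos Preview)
The paper does not prove this lemma; it is quoted verbatim from \cite[Theorem 2.10]{CHKV}, and the introduction summarizes the argument of \cite{CHKV} as precisely the one you outline: run the Leray--Serre spectral sequence for $F_2^{\times n}\to\Gamma_{2,n}\to GL_2(\mathbb{Z})$, observe that the anti-invariants are supported on $H^1(GL_2(\mathbb{Z});H^n(F_2^{\times n}))$, and identify this with cusp forms of weight $n+2$ via Eichler--Shimura. Your proposal is correct and matches that approach, and you have correctly flagged the only genuinely delicate step (the determinant-twist bookkeeping that decides whether the Eisenstein line survives).
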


For ease of notation we denote Lie graph homology by $H_\ast(\Gamma)$ from now on.

\subsection{The weak Feynman transform}
Let us recall the notion of weak modular operads (aka $A_\infty$-modular operads) as introduced in \cite{WardMP}.  A modular operad has an operation for every modular graph, but each such operation can be decomposed into a sequence of one-edged compositions.  The one-edged compositions may be thought of as generating operations.  A weak modular operad, on the other hand, has a {\it generating operation} for every modular graph.  These generating operations may themselves be composed to form operations corresponding to every nested graph.
 
To be precise, the generating operation in a weak modular operad corresponding to a modular graph $\gamma$ of type $(g,n)$ is a degree $-1$ linear map
\begin{equation}
	\mathfrak{K}(\gamma)\tensor A(\gamma) \stackrel{\tilde{\mu}_\gamma}\longrightarrow A(g,n)
\end{equation}
Fixing an order of the set of edges $E=E(\gamma)$, this operation induces a degree $|E|-1$ map
\begin{equation}\label{mp}
A(\gamma) \stackrel{\mu_\gamma}\longrightarrow A(g,n).
\end{equation}
The operation $\mu_\gamma$ depends on the chosen order of the edges, but only up to sign.  Note that if $|E|=1$ there is a unique possible order and we recover the form of the generating operations in a modular operad.  Modular operads are weak modular operads whose higher operations, meaning $\mu_\gamma$ for which $|E(\gamma)|>1$, vanish.  Below, following {\it op.\ cit.}, these higher operations will be referred to as Massey products.

The Feynman transform may be generalized to take weak modular operads as input and produce $\mathfrak{K}$-twisted modular operads as output.  We denote this construction by lowercase $\FT$ to contrast it with the classical Feynman transform $\mathsf{FT}$.  The underlying $\mathfrak{K}$-twisted modular operad is the same
\begin{equation}
	\FT(A)(g,n):= \bigoplus A(\gamma)^\ast\tensor_{Aut(\gamma)} \mathfrak{K}(\gamma),
\end{equation}
but the differential is the linear dual of the sum of all generating operations.  If we view a modular operad as a weak modular operad by extension by zero, then its Feynman transform and weak Feynman transform coincide.

Composition of operations in a weak modular operad obey differential conditions analogous to the differential conditions for operations in an $A_\infty$ algebra.  Given a weak modular operad whose internal differential is $0$, these differential conditions collapse, and we may forget the higher operations to produce a modular operad.  Conversely we say a weak modular operad structure extends a modular operad structure if forgetting the higher operations of the former produces the latter.  Unlike extension by zero, the act of forgetting operations does not commute with the (weak) Feynman transform.

The advantage of the weak Feynman transform is that it satisfies the following homotopy transfer theorem:

\begin{theorem}\label{mpthm}\cite{WardMP}  Let $A$ be a dg modular operad and consider $H_\ast(A)$ with its induced modular operad structure.  There exists an extension of $H_\ast(A)$ to a weak modular operad for which $A\sim H_\ast(A)$ as weak modular operads.
\end{theorem}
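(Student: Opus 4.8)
The plan is to prove this as an instance of the homotopy transfer theorem, realizing weak modular operad structures as square-zero differentials on the Feynman transform and transferring across an equivariant deformation retraction. First I would fix the transfer data. Over a field $k$ of characteristic zero each complex $A(g,n)$ deformation retracts onto its homology: choose a projection $p\colon A(g,n)\to H_\ast(A)(g,n)$, an inclusion $i$, and a homotopy $h$ with $\mathrm{id}-ip=dh+hd$ and $pi=\mathrm{id}$. Since $k[S_n]$ is semisimple we may average to make $(i,p,h)$ simultaneously $S_n$-equivariant, and over a field we may further impose the side conditions $h^2=0$, $ph=0$, $hi=0$, which streamline the formulas below.

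Second, I would use the recollection above that a weak modular operad structure extending $H_\ast(A)$ is exactly a square-zero degree $-1$ differential $D$ on the free $\mathfrak{K}$-twisted modular operad $\FT(H_\ast(A))$ whose linear components are dual to the generating operations $\tilde\mu_\gamma$; equivalently it is a Maurer--Cartan element of the associated deformation complex. The dg modular operad $A$ itself corresponds to the square-zero differential $D_A$ on $\FTGK(A)$ assembled from its internal differential and its one-edge contractions. The task is therefore to transfer $D_A$ to such a $D$.

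Third, I would define the generating operations by the standard sum-over-graphs transfer formula: for a modular graph $\gamma$, realize $\gamma$ as an iterated sequence of one-edge contractions, and to each such realization associate the composite that applies $i$ on the inputs, a one-edge structure map of $A$ at each contraction, the homotopy $h$ on each intermediate output, and $p$ at the end; set $\tilde\mu_\gamma$ to be the sum of these composites over all realizations, with signs and edge-order dependence recorded by $\mathfrak{K}(\gamma)$. When $|E(\gamma)|=1$ there is a single term $p\circ\mu_\gamma^A\circ(i\tensor i)$, which is precisely the modular operad composition induced on $H_\ast(A)$, so the transferred structure extends the induced one, as required. That the $\tilde\mu_\gamma$ satisfy the weak modular operad relations is the assertion that the transferred $D$ is again square-zero, which follows formally from $\mathrm{id}-ip=dh+hd$ exactly as in the operadic homotopy transfer theorem. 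Finally, the same retraction data upgrade $i$ to a quasi-isomorphism of Feynman transforms $\FT(A)\to\FT(H_\ast(A))$, that is, an $\infty$-quasi-isomorphism of weak modular operads, witnessing $A\sim H_\ast(A)$.

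The main obstacle, and the feature distinguishing this from the tree-level operadic case, is the interaction of the $\mathfrak{K}$-twist with the loops and positive genus of modular graphs: the realizations of $\gamma$ are not indexed by subtrees, and the orientation data on $E(\gamma)$ must be threaded through the contraction sequences consistently. One must check that the signs arising from ordering $E(\gamma)$ combine with the homotopy identities so that the transferred $D$ genuinely squares to zero, and that the resulting operations are $Aut(\gamma)$-compatible so as to descend to the colimit over isomorphism classes of graphs defining $\FT$. This orientation and equivariance bookkeeping --- rather than any analytic difficulty, the transfer being otherwise formal --- is where the real work lies.
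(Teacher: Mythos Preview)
The paper does not prove this theorem; it is quoted from \cite{WardMP} and used as a black box. So there is no in-paper argument to compare against. Your sketch is an accurate outline of the homotopy transfer theorem for modular operads as developed in that reference: equivariant contraction data obtained by averaging, the identification of a weak modular operad structure with a square-zero differential on the free $\mathfrak{K}$-twisted object, and the sum-over-contractions formula for the transferred operations, with the $\infty$-quasi-isomorphism built from the same data. Your closing paragraph correctly isolates the genuine content, namely the bookkeeping of $\mathfrak{K}$-twists and automorphisms in the presence of loops.

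One point worth tightening: the phrase ``realize $\gamma$ as an iterated sequence of one-edge contractions'' suggests summing over total orders on $E(\gamma)$, which would overcount. In \cite{WardMP} the transferred operations are indexed by \emph{nested} modular graphs (the analog of planar trees in the $A_\infty$ case), and the $\mathfrak{K}$-factor keeps track of orientation data at each nest. This is exactly the issue you flag at the end, but the formula as stated in your third paragraph would need to be corrected to a sum over nests rather than over contraction sequences before the sign and equivariance checks go through.
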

The symbol $\sim$ in the statement of the theorem denotes the existence of an $\infty$-quasi isomorphism.  The important consequence of such a relation is that it induces an honest quasi-isomorphism (aka a homology isomorphism) between $\FTGK(A)$ and $\FT(H_\ast(A))$ upon taking the weak Feynman transform.

\begin{corollary}\label{mpcor}  Let $B$ be a $\mathfrak{K}$-twisted modular operad and define $A= H_\ast(\FTGK(B))$.  The modular operad structure on $A$ may be extended to a weak modular operad structure such that $H_\ast(\FT(A))\cong H_\ast(B)$ as $\mathfrak{K}$-twisted modular operads.
\end{corollary}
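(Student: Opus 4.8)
The plan is to realize the corollary as a two-step composite of known quasi-isomorphisms: the homotopy transfer theorem (Theorem \ref{mpthm}) on one side, and the homotopy-involutivity of the classical Feynman transform on the other. To set this up I would first put $C := \FTGK(B)$. Because $B$ is $\mathfrak{K}$-twisted, its Feynman transform $C$ is an honest (untwisted) dg modular operad, and by definition $A = H_\ast(C)$, equipped with its induced modular operad structure.

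I would then apply Theorem \ref{mpthm} directly to the dg modular operad $C$. This extends $A = H_\ast(C)$ to a weak modular operad and produces an $\infty$-quasi-isomorphism $C \sim A$. By the consequence of that theorem recorded above, applying the weak Feynman transform turns this into an honest quasi-isomorphism of $\mathfrak{K}$-twisted modular operads
\begin{equation*}
\FTGK(C)\ \simeq\ \FT(A),
\end{equation*}
so that $H_\ast(\FT(A)) \cong H_\ast(\FTGK(C))$ as $\mathfrak{K}$-twisted modular operads. It then remains to identify the right-hand side. Here I would invoke that the classical Feynman transform squares to the identity up to quasi-isomorphism: for $B$ of finite type there is a natural quasi-isomorphism $\FTGK(\FTGK(B)) \simeq B$ of $\mathfrak{K}$-twisted modular operads \cite{GeK2}. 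Since $\FTGK(C) = \FTGK(\FTGK(B))$ and $B$ carries the zero internal differential, passing to homology yields $H_\ast(\FTGK(C)) \cong B$. Composing the two identifications gives $H_\ast(\FT(A)) \cong B$ as $\mathfrak{K}$-twisted modular operads, as claimed.

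The one place I expect genuine care to be needed is in checking that every identification above is compatible with the full $\mathfrak{K}$-twisted modular operad structure, and not merely with the underlying chain complexes: one must confirm that the $\infty$-quasi-isomorphism of Theorem \ref{mpthm} and the involutivity map of $\FTGK$ are morphisms of (weak) modular operads, so that the induced maps on homology respect grafting and contraction. As both inputs are structural statements in the cited references, this amounts to tracking the degree shifts and signs carried by the $\mathfrak{K}(\gamma)$ factors through the two transforms; this sign bookkeeping is the only real obstacle, the rest being formal.
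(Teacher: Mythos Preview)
Your argument is correct and is exactly the deduction the paper intends: the corollary is stated without proof, and the surrounding text makes clear that one is to apply Theorem \ref{mpthm} to the dg modular operad $\FTGK(B)$, pass to the weak Feynman transform to obtain a quasi-isomorphism $\FTGK(\FTGK(B))\simeq \FT(A)$, and then invoke the homotopy involutivity $\FTGK\circ\FTGK\simeq \mathrm{id}$ from \cite{GeK2}. Your only added hypothesis, that $B$ has trivial internal differential, is implicit in the statement (since $H_\ast(\FT(A))$ carries the zero differential) and holds in the paper's application $B=\Sigma^{-1}s\mathsf{Lie}$.
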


Let us apply Corollary $\ref{mpcor}$ in the case $A=H_\ast(\Gamma)$ and $B=H_\ast(B)= \Sigma^{-1} s\mathsf{Lie}$ to extend the modular operad structure on $H_\ast(\Gamma)$ to a weak modular operad.  We choose such an extension and denote this weak modular operad by $(h_\ast(\Gamma),\mu)$.   Since the $\mathfrak{K}$-twisted modular operad $\Sigma^{-1} s\mathsf{Lie}$ is $0$ in genus $g>0$, Corollary $\ref{mpcor}$ tells us:
\begin{corollary}\label{aacyclic} $\FT(h_\ast(\Gamma))(g,n)$ has no homology for $g>0$.
\end{corollary}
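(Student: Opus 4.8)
The plan is to invoke Corollary~\ref{mpcor} with $B = \Sigma^{-1}s\mathsf{Lie}$ and then to exploit the fact that a cyclic operad, viewed as a $\mathfrak{K}$-twisted modular operad, is concentrated in genus zero. The statement should be essentially formal given the homotopy transfer machinery already established, so the work is in correctly matching up the inputs and reading off the consequence arity-by-arity.

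First I would observe that, by the very definition of Lie graph homology, $H_\ast(\Gamma) = H_\ast(\FTGK(\Sigma^{-1}s\mathsf{Lie}))$, so that setting $A = H_\ast(\Gamma)$ and $B = \Sigma^{-1}s\mathsf{Lie}$ places us exactly in the hypotheses of Corollary~\ref{mpcor}. That corollary then supplies a weak modular operad structure on $H_\ast(\Gamma)$, which is precisely the chosen structure $(h_\ast(\Gamma),\mu)$, together with an isomorphism of $\mathfrak{K}$-twisted modular operads
\[
H_\ast(\FT(h_\ast(\Gamma))) \cong \Sigma^{-1}s\mathsf{Lie}.
\]
Second, because $\mathsf{Lie}$ is a cyclic operad and the passage to $\Sigma^{-1}s\mathsf{Lie}$ extends it by zero in positive genus, the right-hand side is zero in every arity $(g,n)$ with $g > 0$. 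Since the displayed isomorphism is one of $\mathfrak{K}$-twisted modular operads and hence respects the grading by total genus, evaluating at such a $(g,n)$ yields $H_\ast(\FT(h_\ast(\Gamma)))(g,n) \cong 0$, which is exactly the asserted acyclicity.

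The genuine content thus resides in the homotopy transfer theorem (Theorem~\ref{mpthm}) and its dualization to the $\mathfrak{K}$-twisted setting packaged as Corollary~\ref{mpcor}, both of which I may assume here. The only point requiring care is the bookkeeping: one must confirm that the isomorphism produced by Corollary~\ref{mpcor} is compatible with the decomposition by $(g,n)$, so that the positive-genus vanishing of $\Sigma^{-1}s\mathsf{Lie}$ transfers summand-by-summand to the homology of the weak Feynman transform. I do not anticipate any substantive obstacle beyond tracking this grading, since the acyclicity is an immediate corollary of the transfer result applied to an input that is supported in genus zero.
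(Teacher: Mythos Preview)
Your proposal is correct and follows essentially the same approach as the paper: the corollary is stated immediately after the paragraph in which the paper applies Corollary~\ref{mpcor} with $A=H_\ast(\Gamma)$ and $B=\Sigma^{-1}s\mathsf{Lie}$, and the paper's justification is simply the sentence ``Since the $\mathfrak{K}$-twisted modular operad $\Sigma^{-1}s\mathsf{Lie}$ is $0$ in genus $g>0$, Corollary~\ref{mpcor} tells us\ldots''. Your write-up just makes the arity-wise vanishing explicit.
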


The extension in Theorem $\ref{mpthm}$ is not unique in the naive sense; it is unique in a suitably derived sense.  This is reflected by the fact that Corollary $\ref{aacyclic}$ is true for every such extension.

\section{Lie graph homology model}\label{modelsec}  

The goal of this section is to prove Theorem $\ref{introthm1}$.  Our starting point is a coherent choice of Massey products on the modular operad $H_\ast(\Gamma)$, after Theorem $\ref{mpthm}$, and we denote the resulting weak modular operad by $(h_\ast(\Gamma),\mu)$. As above, the choice of extension is not unique and in this section we consider what must be true for any such extension.  In what follows we will view $\FT(h_\ast(\Gamma))(g,n)$ as having cohomological grading conventions, and so we do not negate the degree of a homology class when taking its linear dual.

\subsection{Degree 0 and Genus 0}
\begin{lemma}
If $\gamma$ is a graph of total genus 0 then $\mu_\gamma=0$ unless $\gamma$ has exactly one edge.
\end{lemma}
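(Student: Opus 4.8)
The proof will be a degree count, turning on the structure of genus-$0$ Lie graph homology. The key input is that $H_\ast(\Gamma_{0,n})$ is concentrated in a single homological degree $d(n)$ for each $n$. Indeed, for $g=0$ the groups of \cite{CHKV} are trivial (both $Out(F_0)$ and $F_0^{\times n}$ are trivial), so $H_\ast(\Gamma_{0,n}) = k$ sits in degree $0$ for every $n\geq 3$; equivalently, by Koszulness of the Lie operad the genus-$0$ part of $\mathsf{FT}(\Sigma^{-1}s\mathsf{Lie})$ has homology the commutative cyclic operad, concentrated in a single arity-wise degree. Thus $d(n)=0$ for all $n$. I record the additivity $\sum_v d(n(v)) = d(n)$, which is immediate here since every $d=0$; more robustly, it is forced by the fact that the underlying (honest) modular operad composition along a tree is a degree-$0$ nonzero map, which for the affine form $d(m)=\alpha(m-2)$ combines with the half-edge count $\sum_v n(v) = n + 2|E(\gamma)|$ and $|V(\gamma)| = |E(\gamma)|+1$ to give $\sum_v d(n(v)) = \alpha(n-2) = d(n)$ independently of $\alpha$.

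Now let $\gamma$ be of total genus $0$. Then $\beta_1(\gamma)=0$ and every vertex has genus $0$, so $\gamma$ is a tree whose vertices are all genus-$0$ vertices, and the source of $\mu_\gamma$ is $A(\gamma) = \bigotimes_{v} H_\ast(\Gamma_{0,n(v)})$, a tensor product of copies of $k$ concentrated in degree $\sum_v d(n(v)) = d(n)$. The operation $\mu_\gamma$ is a map of degree $|E(\gamma)|-1$, so it carries this source into homological degree $d(n)\pm(|E(\gamma)|-1)$ of the target $H_\ast(\Gamma_{0,n})$. When $|E(\gamma)|\geq 2$ this target degree differs from $d(n)$, which is the only degree in which $H_\ast(\Gamma_{0,n})$ is nonzero; hence $\mu_\gamma=0$. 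Crucially the argument uses only the nonzero degree shift $|E(\gamma)|-1$ and the single-degree concentration, not any value of the Massey products, so it applies to every coherent choice of extension $(h_\ast(\Gamma),\mu)$.

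The only genuine content is the single-degree concentration of genus-$0$ Lie graph homology, i.e.\ its identification with the commutative cyclic operad. This is where care is needed: one must track the $\mathfrak{K}$-twist and the $\Sigma^{-1}s$ shift in the Feynman transform so that the homological grading on $H_\ast(\Gamma_{0,n})$ invoked above is pinned down unambiguously, and confirm that these shifts place the homology in a single degree rather than spreading it out. Once that recollection is in place the degree mismatch is automatic for $|E(\gamma)|\geq 2$, and no further obstacle remains.
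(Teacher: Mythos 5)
Your proof is correct and is essentially the paper's own argument: $H_\ast(\Gamma_{0,n})=H_0(\Gamma_{0,n})\cong k$ (by Koszul duality between $\mathsf{Com}$ and $\mathsf{Lie}$, or by triviality of the genus-zero groups), so the degree $|E(\gamma)|-1$ shift of $\mu_\gamma$ forces vanishing whenever $|E(\gamma)|\geq 2$. The additional discussion of additivity of degrees and the affine form $d(m)=\alpha(m-2)$ is unnecessary once everything is concentrated in degree $0$, but it does no harm.
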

\begin{proof}
	Koszul duality between the commutative and Lie operads implies that for each $n\geq 3$,  $H_\ast(\Gamma_{0,n})= H_0(\Gamma_{0,n})\cong \mathsf{Com}(0,n)\cong k$.  Given a graph $\gamma$ with $E$ edges, $\mu_\gamma$ has degree $E-1$ and so must vanish unless $E-1=0$.
\end{proof}
Above we have written $\mathsf{Com}$ for the commutative (cyclic) operad.  By abuse of notation we write $\mathsf{Com}$ and $\overline{\mathsf{Com}}$ for the modular operads associated to it by extension by $0$ and the modular envelope respectively.  Explicitly, for a stable pair $(g,n)$:
\begin{equation*}
\mathsf{Com}(g,n) = \begin{cases} 
k & \text{ if } g=0 \\
0 & \text{ if } g\geq 1
 \end{cases}  \ \ \  \text{ and }
\ \ \ 
 \overline{\mathsf{Com}}(g,n) = \begin{cases} 
 k & \text{ if } g=0 \\
 k & \text{ if } g\geq 1.
 \end{cases} 
\end{equation*}
We may view these modular operads as weak modular operads with trivial higher operations.  There are arity-wise surjective morphisms of weak modular operads 
\begin{equation*}
h_\ast(\Gamma) \to \overline{\mathsf{Com}} \to \mathsf{Com}.
\end{equation*}
The fact that the former is a weak modular operad map follows from the fact that any higher operation must land in positive degree.  Taking the weak Feynman transform, we have a sequence of arity-wise injections of $\mathfrak{K}$-twisted modular operads:
\begin{equation}\label{inj}
\FT(\mathsf{Com}) \hookrightarrow \FT(\overline{\mathsf{Com}}) \hookrightarrow \FT(h_\ast(\Gamma)).
\end{equation}
This latter map is the inclusion of those graphs labeled by degree $0$ classes at each vertex.

\subsection{Bigrading}

For each stable $(g,n)$, we write $\FT(h_\ast(\Gamma))(g,n)^{r,s}$ for the span of homogeneous elements having $r$ edges and internal degree labels summing to $s\geq 0$.  In particular both $r$ and $s$ are non-negative and the total degree is $r+s$. The differential expands $m\geq 1$ edges:
\begin{equation}\label{d}
\FT(h_\ast(\Gamma))(g,n)^{r,s}\stackrel{\partial}\to \bigoplus_{m\geq 1} \FT(h_\ast(\Gamma))(g,n)^{r+m,s-m+1}
\end{equation}

The differential applied to $\FT(h_\ast(\Gamma))(g,n)^{r,0}$ can only expand one edge (lest $s<0$) and so $\FT(h_\ast(\Gamma))(g,n)^{\ast,0}\subset \FT(h_\ast(\Gamma))(g,n)$ is a subcomplex.  Since $h_0(\Gamma)\cong \overline{\mathsf{Com}}$ as modular operads, we may identify $\FT(h_\ast(\Gamma))(g,n)^{\ast,0}$ with the subcomplex $\FT(\overline{\mathsf{Com}})$ in Equation $\ref{inj}$.

\begin{lemma}\label{upper}  Given $(g,n)$ we have:
		\begin{equation*}	
	\FT(h_\ast(\Gamma))(g,n)^{r,s} = \begin{cases}
	0 & \text{ if } s> 2g+n-3 \\ 
	0 & \text{ if } s= 2g+n-3 \text{ and } r \neq 0 \\
	H_{2g+n-3}(\Gamma_{g,n})^\ast & \text{ if } s= 2g+n-3 \text{ and } r = 0  \\ 
	0 & \text{ if } r+s > 3g+n-3  \\
	0 & \text{ if } r+s \geq 3g+n-3 \text{ and } s \neq 0. \\
	\end{cases}
	\end{equation*}
\end{lemma}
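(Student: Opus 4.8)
The plan is to argue one graph at a time, since $\FT(h_\ast(\Gamma))(g,n)^{r,s}$ is a direct sum over isomorphism classes of modular graphs $\gamma$ of the $Aut(\gamma)$-coinvariants of $\bigl(\bigotimes_{v}H_{d(v)}(\Gamma_{g(v),n(v)})^\ast\bigr)\tensor \mathfrak{K}(\gamma)$, where $r=|E(\gamma)|$ and $s=\sum_v d(v)$ is the total internal degree of a homogeneous label. The two external inputs I will use are: (A) the vcd bound $H_i(\Gamma_{g,n})=0$ for $i>2g+n-3$, so that $d(v)\le 2g(v)+n(v)-3$ at every vertex; and (B) the genus-zero concentration $H_i(\Gamma_{0,n})=0$ for $i\neq 0$ recorded above, so that $d(v)=0$ whenever $g(v)=0$.

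First I would record two bookkeeping identities for a connected $\gamma$ with $V$ vertices, $E=r$ edges and first Betti number $\beta_1=E-V+1$. Using $\sum_v 2g(v)=2(g-\beta_1)$ and $\sum_v n(v)=2E+n$ one obtains
\begin{equation*}
\sum_{v}\bigl(2g(v)+n(v)-3\bigr)=2g+n-2-V.
\end{equation*}
Combined with (A) this gives $s\le 2g+n-2-V\le 2g+n-3$ since $V\ge 1$, which is the first line of the table; moreover equality $s=2g+n-3$ forces $V=1$ together with $d(v)=2g(v)+n(v)-3$ at the unique vertex. Adding $r=E$ and using $E-V=\beta_1-1\le g-1$ (because $\beta_1=g-\sum_v g(v)\le g$) gives $r+s\le 2g+n-3+\beta_1\le 3g+n-3$, the fourth line; here equality forces both that all labels are maximal and that $\beta_1=g$, i.e.\ that every vertex has genus zero.

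Two of the remaining cases are then immediate. For $s=2g+n-3$ with $r=0$ the only graph is the single genus-$g$ corolla on $n$ labelled (hence rigid) legs, whose automorphism group is trivial and whose label lives in $H_{2g+n-3}(\Gamma_{g,n})^\ast$; this is the third line. For the fifth line, equality $r+s=3g+n-3$ forces all $g(v)=0$ and all labels maximal, but by (B) maximality at a genus-zero vertex means $d(v)=0=n(v)-3$, i.e.\ $n(v)=3$ and $s=\sum_v d(v)=0$; hence $r+s=3g+n-3$ with $s\neq 0$ cannot occur and the space vanishes.

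The main obstacle is the second line: $s=2g+n-3$ with $r\neq 0$. Here $V=1$ and the single vertex has genus $h=g-E$, carries $E=r\ge 1$ tadpoles and has valence $m=2E+n$, labelled by the top (vcd-degree) homology $H_{2h+m-3}(\Gamma_{h,m})^\ast$. I would exhibit vanishing through the automorphism $\tau$ that flips one tadpole: $\tau$ fixes $\mathfrak{K}(\gamma)=\det E(\gamma)$, as it does not permute the set of edges, and acts on the label by the transposition of the two glued half-edges in $S_m$. Thus the $Aut(\gamma)$-coinvariants can be nonzero only if the vcd homology $H_{2h+m-3}(\Gamma_{h,m})$ carries a vector fixed by this transposition. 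The crux is therefore the equivariance statement that every transposition acts by $-1$ on vcd homology, i.e.\ that $H_{2h+m-3}(\Gamma_{h,m})$ is sign-isotypic as an $S_m$-module; granting this, no flip-invariant vector survives and the line vanishes. This is clean for $h=1$, where Lemma $\ref{Lieg1}$ gives $H_{m-1}(\Gamma_{1,m})\cong V_{1,1^{m-1}}=\text{sgn}_m$ when $m$ is odd and $0$ otherwise; for the higher-genus vertices $h\le g-1$ that occur for larger $g$ one must establish the analogous sign-isotypy of top homology (or, more precisely, the vanishing of the $\text{sgn}$-isotypic part under the hyperoctahedral automorphism group $S_E\ltimes(\mathbb{Z}/2)^E$ of the tadpoles), and this equivariant input is where the real work lies.
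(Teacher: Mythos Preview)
Your handling of lines (i), (iii), (iv), (v) is correct and essentially identical to the paper's: both arguments sum the vcd bounds $d(v)\le 2g(v)+n(v)-3$, rewrite via the flag count and Euler characteristic to obtain $s\le 2g+n-2-V$ and then $r+s\le 2g+n-3+\beta_1\le 3g+n-3$, and for line (v) invoke the concentration $H_\ast(\Gamma_{0,-})=H_0$ to force $s=0$ at equality.

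Line (ii) is where you diverge from the paper and where your argument has a genuine gap. You correctly reduce to a single vertex of genus $h=g-r$ with $r\ge 1$ tadpoles and valence $m=2r+n$, carrying a label in top degree $2h+m-3=2g+n-3$, and you propose to kill the $Aut(\gamma)$-coinvariants via a tadpole flip, which acts as a transposition on the label and trivially on $\mathfrak{K}(\gamma)$. That works precisely when no nonzero vector in $H_{2h+m-3}(\Gamma_{h,m})$ is fixed by a transposition, which (for $m\ge 3$) is equivalent to this $S_m$-module being purely sign-isotypic. You verify this for $h=1$ via Lemma~\ref{Lieg1} but leave $h\ge 2$ open, and that is not a formality: it is a substantive statement about top-degree Lie graph homology that you would need to prove or cite. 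The paper does \emph{not} take your automorphism route; it dispatches the lone-vertex case in one line, asserting that ``$g_1<g$ and so the above inequality must be strict''. Note, however, that $2g_1+n_1-3=2(g-r)+(n+2r)-3=2g+n-3$ on the nose, so the raw vcd bound is \emph{not} strict here, and the paper's sentence is at best elliptical. In fact line (ii) is literally false at the boundary biarity $(g,n)=(1,1)$: the unique one-edge graph is a tadpole at a genus-$0$ trivalent vertex, the flip acts trivially on the trivial module $H_0(\Gamma_{0,3})$, and one finds $\FT(h_\ast(\Gamma))(1,1)^{1,0}\cong k\neq 0$. This degenerate case never enters the paper's applications (only line (v) is invoked later, in genus $2$), but it confirms your instinct that line (ii) requires equivariant input beyond pure counting --- input that is missing from your write-up.
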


\begin{proof}  
This lemma can be interpreted as stating that $\FT(h_\ast(\Gamma))(g,n)$ is supported on the shaded area in the following cartoon picture:
\begin{center}
		\includegraphics[width=0.7\linewidth]{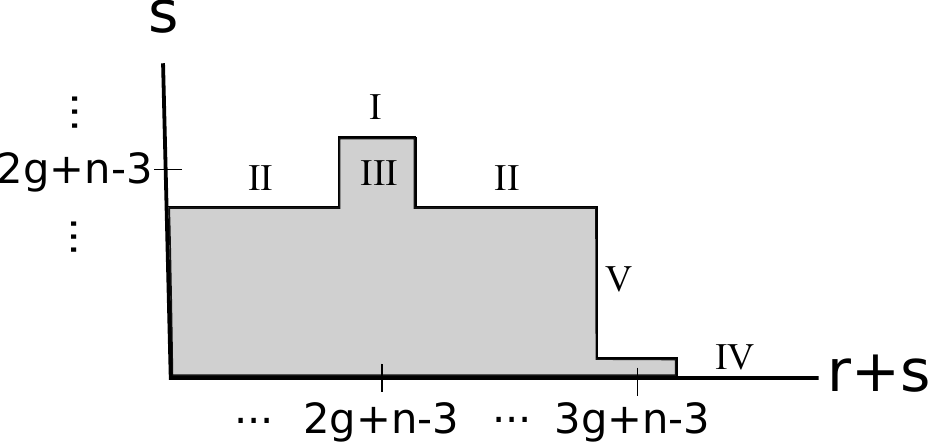}
\end{center}
The Roman numerals indicate the five statements inside the bracket in the statement of the lemma.

We will prove the statements by contraposition.  Suppose that $\FT(h_\ast(\Gamma))(g,n)^{r,s} \neq 0$ for some $(r,s)$.  Choose a non-zero homogeneous element supported on a graph $\gamma$ with vertices $v_1\cdc v_m$.  Each vertex carries a label by a class in $H_\ast(\Gamma_{g_i,n_i})^\ast$ and the degree of this class is at most $2g_i+n_i-3$, by \cite[Proposition 2.6]{CHKV}.  Therefore
	\begin{equation}\label{int}
	s\leq \sum_{i=1}^m 2g_i+n_i-3
	\end{equation}
The total genus of the element satisfies  $g=\beta_1(\gamma)+\sum g_i$ and counting flags gives us $\sum n_i = 2r+n$.  Therefore Equation $\ref{int}$ implies
\begin{equation*}
s\leq (2g-2\beta)+(2r+n)-3m = 2g+n+2(r-\beta)-3m 
\end{equation*}
Finally, from the Euler characteristic we know $m-r = 1-\beta$, hence
\begin{equation}\label{five}
s\leq  2g+n+2m-2-3m=2g+n-2-m
\end{equation}
The first statement then follows from the fact that $m\geq 1$.  

For the second statement, assuming $r\neq 0$  there are two cases.  First if $\gamma$ has more than one vertex, we see the statement follows as above.  So now suppose it has a lone vertex.  Since $r\neq 0$, it must have a loop.  Thus $g_1<g$ and so the above inequality must be strict.%and so $s \leq 2g_1+n_1-3 < 2g +n-3$ as desired.

The third statement follows immediately from the definition of the Feynman transform.

For the fourth statement we add $r$ to both sides of Equation $\ref{five}$ and use the Euler characteristic and the fact that $\beta_1(\gamma)\leq g(\gamma)$.  Finally, for statement five we observe that if $s\neq 0$, then there must be a vertex of non-zero genus, hence $\beta_1(\gamma)<g(\gamma)$, and so the total degree of $3g+n-3$ can not be realized in this case, from which the claim follows. \end{proof}

\subsection{Genus 1}

Let $\mathsf{P}_n$ be the modular graph corresponding to the polygon with $n$ vertices, each of genus $0$, and $n$ legs, one at each vertex and labeled in the dihedral order.  We view $\mathsf{P}_n$ as carrying an order on its edges induced by the dihedral order of the legs (so the edge connecting 1 to 2 comes first, followed by the edge connecting 2 to 3 etc.).  After Equation $\ref{mp}$, the associated Massey product $\mu_{\mathsf{P}_n}$ is an operation
\begin{equation}
H_\ast(\Gamma_{0,3})^{\tensor n}\stackrel{\mu_{\mathsf{p}_n}}\longrightarrow H_{\ast+n-1}(\Gamma_{1,n}).
\end{equation}

\begin{lemma}\label{genus1} For each $j\geq 1$, 
	$\mu_{\mathsf{P}_{2j+2}}=0$ and $\mu_{\mathsf{P}_{2j+1}}\neq 0$.
\end{lemma}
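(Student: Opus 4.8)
The plan is to treat the two parities separately. The vanishing statement $\mu_{\mathsf{P}_{2j+2}}=0$ is immediate from a degree count: the operation $\mu_{\mathsf{P}_n}$ raises homological degree by $|E(\mathsf{P}_n)|-1=n-1$, and since $H_\ast(\Gamma_{0,3})$ is concentrated in degree $0$, its image lies in $H_{n-1}(\Gamma_{1,n})$. For $n=2j+2$ even, $n-1$ is odd, so $H_{n-1}(\Gamma_{1,n})=0$ by Lemma~\ref{Lieg1}, forcing $\mu_{\mathsf{P}_{2j+2}}=0$.

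For the non-vanishing statement with $n=2j+1$ odd, I would argue via acyclicity. Here the target $H_{n-1}(\Gamma_{1,n})\cong V_{1^n}$ is the sign representation, and I would pass to the anti-invariant complex $\FT(h_\ast(\Gamma))(1,n)_{\Lambda_n}$, which is still acyclic because $(-)_{\Lambda_n}$ is exact in characteristic $0$ (Corollary~\ref{aacyclic}). The one-dimensional space $\big(H_{n-1}(\Gamma_{1,n})^\ast\big)_{\Lambda_n}$ sits in bidegree $(0,n-1)$; let $c$ denote a generator. Because the differential strictly raises the edge number $r$ (see Equation~\ref{d}), no element maps into bidegree $(0,\ast)$, so $c$ is not a coboundary; acyclicity then forces $\partial c\neq 0$. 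Writing $\partial c\in\bigoplus_{m\geq 1}\FT(h_\ast(\Gamma))(1,n)^{m,n-m}_{\Lambda_n}$, I observe that for $1\leq m\leq n-1$ one has $s=n-m\neq 0$ and $r+s=n=3g+n-3$, so the last case of Lemma~\ref{upper} kills these summands. Hence $\partial c$ is concentrated in bidegree $(n,0)$ and is nonzero there.

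It then remains to identify the anti-invariant part of bidegree $(n,0)$. A count of flags together with the Euler characteristic shows that the only graphs contributing in $(n,0)$ are genus-$1$, $n$-legged graphs all of whose vertices are trivalent of genus $0$ (a positive-genus vertex would violate stability against the flag count); their vertex labels are the symmetric degree-$0$ classes of $H_0(\Gamma_{0,3})$, so an automorphism acts on such an element purely through the sign of its permutation of edges. Any such graph with a vertex carrying two legs admits the leg-transposition automorphism, which is odd on legs and trivial on edges, and so contributes $0$ to $(-)_{\Lambda_n}$. The unique remaining graph, with exactly one leg at each vertex, is the polygon $\mathsf{P}_n$; for $n$ odd a direct check on $\mathrm{Aut}(\mathsf{P}_n)=D_n$ shows rotations act by $+1$ and reflections by $(-1)^{(n-1)/2}$ on both legs and edges, so the leg-sign and edge-sign characters agree on $D_n$ and $\mathsf{P}_n$ supports a one-dimensional anti-invariant space. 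Consequently the anti-invariant part of $(n,0)$ is spanned by the polygon class, and the $(n,0)$-component of $\partial c$ is precisely the anti-invariantization of the dual of $\mu_{\mathsf{P}_n}$. Since $\partial c\neq 0$, this yields $\mu_{\mathsf{P}_n}\neq 0$.

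I expect the main obstacle to be the final identification: proving that $\mathsf{P}_n$ is the \emph{only} graph surviving in the anti-invariant part of $(n,0)$, which requires both the leg-swap argument eliminating every graph with a multi-leg vertex and the parity bookkeeping on $D_n$ (the coincidence that the leg and edge characters both equal $(-1)^{(n-1)/2}$, so that no reflection obstructs the sign isotype). Everything else is a degree count, an application of Lemma~\ref{upper}, and the observation that $c$ cannot be a coboundary.
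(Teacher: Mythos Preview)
Your argument is correct and runs on the same engine as the paper's: the even case is the same degree count, and the odd case is the same contrapositive via acyclicity (the corolla $\alpha_{2j+1}^\ast$ cannot be a coboundary, hence cannot be a cocycle, hence some $\mu_\gamma$ hits $\alpha_{2j+1}$, and the only candidate is the polygon). The packaging differs in two places. First, you pass to the anti-invariant complex and invoke Lemma~\ref{upper} to kill the intermediate bidegrees $(m,n-m)$ for $1\le m\le n-1$, whereas the paper stays in the full Feynman transform and instead argues, by an equivariance/leaf-vertex analysis, that every non-polygon $\mu_\gamma$ lands in the $(ij)$-invariants of $H_\ast(\Gamma_{1,n})$ and so misses the sign isotype $\langle\alpha_{2j+1}\rangle$. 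Second, your identification of the surviving graph in bidegree $(n,0)$ uses a flag/Euler count plus the leg-swap automorphism, while the paper's classification (graphs of type $(1,n)$ with no genus-$0$ leaf vertex are exactly the relabelings of $\mathsf{P}_n$) is the same statement proved by a short tree/cycle dichotomy. Your route is arguably tidier once Lemma~\ref{upper} is available; the paper's route is self-contained and yields Corollary~\ref{dalpha} as an immediate byproduct, since it has already shown that \emph{every} non-polygon contribution to $d(\alpha_{2j+1}^\ast)$ vanishes, not merely those visible after anti-invariantization.
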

\begin{proof}
	Since $H_\ast(\Gamma_{0,3})=H_0(\Gamma_{0,3})$, the target of $\mu_{\mathsf{P}_n}$ is $H_{n-1}(\Gamma_{1,n})$ which by Lemma $\ref{Lieg1}$ is non-zero only if $n$ is odd and at least 3, whence the first statement.  
	
To prove the second statement, define a leaf vertex to be a vertex in a graph adjacent to at most one edge (recall edges are internal by convention).  Consider a modular graph $\gamma$ of type $(g,n)=(1,2j+1)$ with at least one edge and which has a leaf vertex of genus $0$.  By stability considerations, this leaf vertex must have at least two adjacent legs, labeled by $i$ and $j$ say.  Since the target of $\mu_\gamma$ is alternating, we must have $(ij)\mu_\gamma = - \mu_\gamma$. However, being genus $0$ the leaf vertex is labeled by the trivial representation, hence $\mu_\gamma = \mu_\gamma (ij)$.  Equivariance of the composition operations thus forces $\mu_\gamma=0$.
	
Next consider a modular graph $\gamma$ with $(g,n)=(1,2j+1)$ which has no leaf vertex of genus $0$.  Then $\gamma = \sigma\cdot \mathsf{P}_{2j+1}$ for some $\sigma \in S_{2j+1}$.  Indeed, if $\beta_1(\gamma)=0$, then $\gamma$ is a tree with at most one leaf vertex of genus not zero, and hence at least one leaf vertex of genus $0$.  And if $\beta_1(\gamma)=1$, then any edge which is not on the unique cycle of $\gamma$ could be removed to disconnect the graph into two components, one of which is a tree with genus $0$ vertices.  This tree is either a lone vertex (in which case this vertex was a leaf vertex in $\gamma$) or it contains at least two leaf vertices, at least one of which was a leaf vertex in the original $\gamma$.

Now suppose by way of contradiction that $\mu_{\mathsf{P}_{2j+1}}=0$. This would force $\mu_{\sigma\mathsf{P}_{2j+1}}=0$ for each permutation $\sigma$, and hence, by the preceding two paragraphs, it would force $\mu_\gamma=0$ for each graph $\gamma$ with $(g,n)=(1,2j+1)$.  This would in turn imply that the corolla, aka graph with 1 vertex, labeled by any non-zero element of $H_{2j}(\Gamma_{1,2j+1})^\ast$ was a cycle in $\FT(h_\ast(\Gamma))(1,2j+1)^{0,2j}$.  Having no edges, such a class couldn't be a boundary and therefore would represent a non-trivial class in homology.  Alas this contradicts Corollary $\ref{aacyclic}$, from which we conclude that $\mu_{\mathsf{P}_{2j+1}}\neq 0$. \end{proof}

Let us define $\alpha_{2j+1}:=\mu_{\mathsf{P}_{2j+1}}(1)$ for each $j\geq 1$.  In particular, the lemma tells us that each $\alpha_{2j+1}$ spans the alternating $S_{2j+1}$ representation $H_{2j}(\Gamma_{1,2j+1})$.  Define $\alpha^\ast_{2j+1}$ to be the unique linear functional sending $\alpha_{2j+1}$ to $1$.  Viewing $\alpha^\ast_{2j+1}$ as the label of a $2j+1$-corolla of genus $1$, we may view $\alpha_{2j+1}^\ast \in \FT(h_\ast(\Gamma))(1,2j+1)^{0,2j}$.  On the other hand, having specified an order on its edges, $\mathsf{P}_{2j+1}$ can be viewed as an element of $\FT(h_\ast(\Gamma))(1,2j+1)^{2j+1,0}$.  The proof of the Lemma immediately implies:

\begin{corollary}\label{dalpha} $d_{\FT}(\alpha^\ast) = \ds\sum_{\sigma\in S_{2j+1}} sgn(\sigma) \mathsf{P}_{2j+1}$.
\end{corollary}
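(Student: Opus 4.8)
The plan is to unwind the definition of the weak Feynman transform differential and read off the coefficients using the vanishing already established in the proof of Lemma \ref{genus1}. Recall that $d_{\FT}$ is the linear dual of the sum of all generating operations $\tilde\mu_\gamma$, so on the summand indexed by a graph $\gamma$ (with a fixed edge orientation $\omega_\gamma$) the component of $d_{\FT}(\alpha^\ast)$ is the functional $\mu_\gamma^\ast(\alpha^\ast)$, namely the composite $x\mapsto\langle\alpha^\ast,\mu_\gamma(x)\rangle$ tensored with $\omega_\gamma$. Since $\alpha^\ast$ is supported on the edgeless $(2j+1)$-corolla of genus $1$ and $d_{\FT}$ strictly raises the number of edges, only graphs $\gamma$ of type $(1,2j+1)$ with at least one edge can contribute, and the coefficient of each such $\gamma$ is governed entirely by $\langle\alpha^\ast,\mu_\gamma(-)\rangle$.

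First I would invoke the two vanishing statements established in the proof of Lemma \ref{genus1}. Any $\gamma$ of type $(1,2j+1)$ with at least one edge and a genus $0$ leaf vertex has $\mu_\gamma=0$ by equivariance against the alternating target, while any such $\gamma$ with no genus $0$ leaf vertex is necessarily of the form $\sigma\cdot\mathsf{P}_{2j+1}$ for some $\sigma\in S_{2j+1}$. Hence the only summands on which $d_{\FT}(\alpha^\ast)$ is supported are the relabeled polygons $\sigma\cdot\mathsf{P}_{2j+1}$, each of whose vertices is genus $0$ and trivalent and therefore carries the single generating label $1\in H_0(\Gamma_{0,3})$.

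Next, for a fixed $\sigma$, I would compute $\mu_{\sigma\cdot\mathsf{P}_{2j+1}}(1)$ using equivariance of the Massey products together with the fact that $\alpha_{2j+1}=\mu_{\mathsf{P}_{2j+1}}(1)$ spans the alternating representation $H_{2j}(\Gamma_{1,2j+1})$. Since all vertex labels are the symmetric class $1$, equivariance gives $\mu_{\sigma\cdot\mathsf{P}_{2j+1}}(1)=\sigma\cdot\mu_{\mathsf{P}_{2j+1}}(1)=\sigma\cdot\alpha_{2j+1}=sgn(\sigma)\,\alpha_{2j+1}$, whence $\langle\alpha^\ast,\mu_{\sigma\cdot\mathsf{P}_{2j+1}}(1)\rangle=sgn(\sigma)$. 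Assembling the surviving summands then yields
\[
d_{\FT}(\alpha^\ast)=\sum_{\sigma\in S_{2j+1}}sgn(\sigma)\,\mathsf{P}_{2j+1},
\]
the anti-symmetrization of the oriented polygon.

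The step I expect to be the main obstacle is the careful bookkeeping of signs: one must verify that the orientation sign arising from the $\mathfrak{K}(\sigma\cdot\mathsf{P}_{2j+1})$ factor, relative to the edge order on $\mathsf{P}_{2j+1}$ fixed by the dihedral convention, is exactly compatible with the $sgn(\sigma)$ produced by equivariance, so that no spurious signs intervene and the coefficients come out precisely as $sgn(\sigma)$. Everything else is an immediate consequence of the classification of genus $1$ graphs and the vanishing of the associated Massey products recorded in the proof of Lemma \ref{genus1}.
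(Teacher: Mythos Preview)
Your proposal is correct and is exactly the argument the paper intends: the text simply records that the corollary follows immediately from the proof of Lemma~\ref{genus1}, and what you have written is precisely that unpacking---only relabeled polygons $\sigma\cdot\mathsf{P}_{2j+1}$ survive, and equivariance against the alternating target $H_{2j}(\Gamma_{1,2j+1})$ forces the coefficient $sgn(\sigma)$. Your caveat about the $\mathfrak{K}$-orientation bookkeeping is the only residual check, and it is absorbed into the paper's dihedral edge-ordering convention for $\mathsf{P}_{2j+1}$.
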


\subsection{Collapsing an acyclic subcomplex} 
\begin{definition}  A vertex $v$ of an homogeneous $\gamma\in \FT(h_\ast(\Gamma))$ is called an egg if the degree of its vertex label at $v$ is $0$ and it either has genus $g(v)>0$ or is adjacent to a tadpole (or both). Define $Z(g,n)\subset \FT(h_\ast(\Gamma))(g,n)$ to be the span of all homogeneous elements containing at least one egg. 
\end{definition}

The terminology ``egg'' is chosen because the expansion differential creates (hatches) tadpoles.  Observe that for a non-zero $\gamma$, an egg can be adjacent to at most one tadpole, lest there be parallel edges adjacent to a degree $0$ vertex (an egg can hatch only one tadpole).

\begin{lemma}\label{acyclicZ} For each $(g,n)$, the graded vector space $Z(g,n)$ is an acyclic subcomplex of $\FT(h_\ast(\Gamma))(g,n)$.
\end{lemma}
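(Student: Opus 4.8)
The plan is to establish two things: that $Z(g,n)$ is a subcomplex, and that it is acyclic. For the first I would argue by ``egg persistence''. Fix a nonzero homogeneous $\gamma$ with an egg at a vertex $v$. Because $v$ carries a degree $0$ label and $\mu_{\gamma_v}$ has degree $|E(\gamma_v)|-1$, any expansion supported at $v$ would need source internal degree $1-|E(\gamma_v)|$, which is nonnegative only for a single edge; thus no higher Massey product acts at $v$, and $v$ can be expanded one edge at a time only. A single-edge expansion of $v$ is either a loop expansion, which lowers $g(v)$ by one and attaches a tadpole, or a splitting into two degree $0$ vertices whose genera sum to $g(v)$. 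In the loop case the resulting vertex still carries a tadpole and remains an egg; in the splitting case, if $g(v)>0$ then one new vertex has positive genus and is an egg, while if $v$ instead carried a tadpole then either that tadpole survives at one new vertex (an egg) or its half-edges are separated, creating parallel edges between two degree $0$ vertices and hence a vanishing term. Expansions away from $v$ leave $v$, its label, and any adjacent tadpole untouched. Hence every nonzero term of $\partial\gamma$ again contains an egg, and $Z(g,n)$ is closed under $\partial$.

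For acyclicity the crucial observation is that each egg is a \emph{two-state system}. Set $G=g(v)+1$ if $v$ carries a tadpole and $G=g(v)$ otherwise; this ``total genus at the egg'' is preserved both by contracting the tadpole while raising $g(v)$ by one (\emph{un-hatching}) and by the inverse operation (\emph{hatching}). Since a degree $0$ vertex supports at most one tadpole before vanishing, an egg of total genus $G\geq 1$ has exactly the two states $(g(v),\text{tadpole})\in\{(G-1,\text{yes}),(G,\text{no})\}$, and on degree $0$ classes the loop-contraction of $h_0(\Gamma)\cong\overline{\mathsf{Com}}$ is an isomorphism $k\to k$, so hatching appears in $\partial$ with unit coefficient and un-hatching is its literal inverse. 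This is precisely the local mechanism---an egg playing the role of the loops and higher-genus vertices in \cite[Theorem 1.1(2)]{CGP2}---by which the egg directions contribute acyclic two-term factors.

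To make this global and to isolate it from the higher structure, I would filter $Z(g,n)$ by internal degree $s$. This is an exhausting filtration by subcomplexes, since $\partial$ changes $s$ by $1-m$ with $m\geq 1$ the number of expanded edges and so never raises $s$. On the associated graded only the single-edge part of $\partial$ survives, killing every higher Massey product and reducing the differential to the $\mathfrak{K}$-twisted commutative Feynman transform differential along the degree $0$ (i.e.\ $\overline{\mathsf{Com}}$-labelled) locus, with the positive-degree Lie-graph-homology labels at the remaining vertices behaving as inert coefficients. On this associated graded the egg-subcomplex is exactly of the form treated in \cite[Theorem 1.1(2)]{CGP2}, whose contraction is built from the un-hatching operation above; hence it is acyclic. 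Acyclicity then lifts to $Z(g,n)$ through the filtration spectral sequence, which converges because Lemma~\ref{upper} confines the bidegrees to a bounded region.

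The step I expect to be the main obstacle is organizing the contraction canonically and with correct signs: selecting the egg at which to un-hatch in an automorphism-equivariant way (for example by maximizing $G$ and breaking ties by invariant combinatorial data), and checking that the resulting matching is acyclic so that the local identity promotes to a genuine contraction, all while tracking the $\mathfrak{K}$-factor signs that make hatching and un-hatching mutually inverse isomorphisms and that force the vanishing of parallel-edge terms. The genuinely new point beyond \cite{CGP2} is the presence of Lie-graph-homology labels with higher Massey products rather than commutative labels; it is exactly the internal-degree filtration that quarantines these higher operations, so that the surviving core acyclicity is the commutative one of \emph{op.\ cit.}
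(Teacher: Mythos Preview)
Your argument that $Z(g,n)$ is a subcomplex is correct and matches the paper's. Your ``two-state system'' observation about eggs is also the right local mechanism, and is exactly what the paper exploits. The gap is in the filtration you use to isolate it.

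Filtering by the internal degree $s$ does kill the higher Massey products, since $s\mapsto s-m+1$ with $m\geq 1$. But on the associated graded you are left with the \emph{entire} single-edge expansion differential of $\mathsf{FT}(H_\ast(\Gamma))$, not just the $\overline{\mathsf{Com}}$ part. Your claim that ``positive-degree Lie-graph-homology labels at the remaining vertices behave as inert coefficients'' is false: the one-edge differential is dual to the strict modular operad structure on $H_\ast(\Gamma)$, and this acts nontrivially at positive-degree vertices. A vertex labelled by a class in $H_d(\Gamma_{g,m})^\ast$ with $d>0$ can be split into two vertices with labels of degrees $d_1+d_2=d$, or can have a tadpole expanded via the dual of the self-gluing $H_d(\Gamma_{g-1,m+2})\to H_d(\Gamma_{g,m})$; neither vanishes in general. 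So the associated graded is \emph{not} of the form treated in \cite[Theorem 1.1(2)]{CGP2}, and the reduction you assert does not go through.

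The paper avoids this by using a sharper filtration, by $2|E(\gamma)|-t(\gamma)-z(\gamma)$ where $t$ counts tadpoles and $z$ counts tadpoles at degree-$0$ vertices. A case analysis shows this filtration degree is nondecreasing under \emph{every} term of the differential (Massey products included), and is preserved \emph{only} by hatching a tadpole at a degree-$0$ egg with no tadpole. Thus on the associated graded the surviving differential is exactly the hatching map, and the complex splits as a direct sum indexed by graphs with a chosen nonempty set of $m$ eggs (tadpoles contracted); each summand is identified with the augmented chains $\overline{C}_\ast(\Delta_{m-1})$ tensored with the remaining labels, modulo automorphisms, and is therefore acyclic. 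Your $s$-filtration could be a harmless first step, but it does no work that this finer filtration does not already do, and by itself it leaves you with a complex that is not the one you claim.
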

\begin{proof}  We first argue that $Z(g,n)$ is a subcomplex.  If we apply the expansion differential at a vertex which is not an egg, the egg(s) of the original remain.   On the other hand if there is a unique egg and we apply the expansion differential at that vertex, then we can expand one edge only.  This is because the vertex is labeled by a class of degree $0$, and expanding $e$ edges lowers the degree by $e-1$.  If we expand (hatch) a tadpole, the vertex remains an egg.  If we expand a non-loop edge, then either the egg had genus $> 0$, in which case so does one of the vertices adjacent to the expanded edge, or the egg was adjacent to a tadpole to begin with.  In this case either the original tadpole remains, in which case the adjacent vertex is an egg, or the tadpole is split between the two vertices, in which case there are parallel edges adjacent to vertices labeled by classes of degree $0$, and the resulting differential term is $0$ after quotienting by the action of the automorphism group of the graph.  %Parallel edges includes two tadpoles.

To show the subcomplex $Z(g,n)$ has no cohomology we set up a suitable filtration.  Given a homogeneous $\gamma \in Z(g,n)$ define $t(\gamma)$ to be the number of tadpole edges of $\gamma$.  Define $z(\gamma)$ to be the number of vertices which are of degree $0$ and adjacent to a tadpole.  Note that each such vertex must then be adjacent to exactly one tadpole by parallel edge considerations, and so $z(\gamma)$ is also the number of tadpoles adjacent to a degree $0$ vertex.  We call these simple tadpoles.

We then filter $Z(g,n)$ by defining $Z(g,n)_r$ to be the span of all homogeneous elements $\gamma$ with $2|E(\gamma)| - t(\gamma) - z(\gamma) \geq r$.  This defines a bounded and exhaustive decreasing filtration
$$
Z(g,n)= Z(g,n)_0 \supset \dots \supset Z(g,n)_r \supset Z(g,n)_{r+1}\supset \dots \supset Z(g,n)_{6g+2n-6}\supset 0.
$$

Let us verify that this is indeed a filtration, i.e.\ that it is compatible with the expansion differential.  Let $\gamma^\prime$ be a non-zero homogeneous element appearing in the expansion differential of $\gamma$, formed by expanding a given vertex to a graph $\gamma^{\prime\prime}$ with $e$ edges, $v$ vertices and first Betti number $\beta$.  Furthermore let $x$ be the number of expanded edges which are not tadpoles. The tadpoles of $\gamma^\prime$ were either tadpoles of $\gamma$ or were expanded in the differential.  We thus conclude
$$t(\gamma^\prime) \leq e -x + t(\gamma).$$
Similarly, the simple tadpoles of $\gamma^\prime$ were either simple tadpoles of $\gamma$ or are adjacent to one of the vertices of $\gamma^{\prime\prime}$.  We thus conclude
$$z(\gamma^\prime) \leq z(\gamma) + v.$$
From these two inequalities we have:
\begin{eqnarray}
2|E(\gamma)| - t(\gamma) - z(\gamma)  \leq & 2|E(\gamma)|+ e+v-x-t(\gamma^\prime)  - z(\gamma^\prime) \label{ineq1} \\ \nonumber = & 2|E(\gamma^\prime)|-e+v-x -t(\gamma^\prime) - z(\gamma^\prime) 
 \\ \nonumber  = &   2|E(\gamma^\prime)|-\beta+1  -x-t(\gamma^\prime) - z(\gamma^\prime) 
 \\ \label{ineq2}  \leq & 2|E(\gamma^\prime)|-t(\gamma^\prime) - z(\gamma^\prime) \ \ \ \ \ \ \  \ \ \ \ \ \ \ \ \ 
\end{eqnarray}
For the last inequality note that $\beta\geq e-x$ and hence $\beta+x\geq 1$.  Hence the filtration degree never decreases upon applying the differential, as desired.

The differential fixes the filtration degree precisely when the two inequalities $\ref{ineq1}$ and $\ref{ineq2}$ are in fact equal.  The latter is an equality if and only if $x+\beta=1$.  Given $x+\beta=1$, the former is an equality if and only if $x=0$, $\beta=1$ and the degree of the expanded vertex is zero.  Indeed if $x=1$ and $\beta=0$ then $v=2$, but at most one new simple tadpole could be created (by separating a non-simple tadpole from a higher degree vertex), hence $z(\gamma^\prime)  \leq 1+z(\gamma)< v+z(\gamma)$ and so inequality $\ref{ineq1}$ is not an equality. On the other hand if $x=0$, $\beta=1$ and the degree of the expanded vertex were not zero then the expanded tadpole would not be simple and in this case  $z(\gamma^\prime)  < 1+z(\gamma) = v+z(\gamma)$.  Whence one implication.  For the converse, direct inspection shows that expanding a tadpole from a degree $0$ vertex results in equalities in both $\ref{ineq1}$ and $\ref{ineq2}$.

Now consider the associated graded $(Z(g,n)_r/Z(g,n)_{r+1}, d)$.  By the above analysis, 
a term in the differential corresponding to a given vertex is non-vanishing only if it expands a tadpole at an egg. By parallel edge considerations each degree $0$ vertex can support at most one tadpole, so the non-zero differential terms are precisely those which expand a tadpole at a degree $0$ egg which had no tadpoles to begin with.  Thus, the associated graded admits a splitting with two homogeneous elements belonging to the same summand if and only if their underlying graphs are related by a sequence of expansions or contractions of simple tadpoles.  Note that the data of which tadpoles are simple (or equivalently which vertices are eggs) is not specified solely by the underlying graph; it also requires specifying the degree of the vertex labels of the potential eggs (i.e.\ those vertices which have positive genus or are adjacent to a tadpole).  In particular, the splitting of $(Z(g,n)_r/Z(g,n)_{r+1}, d)$ is indexed by graphs along with a choice of a nonempty subset of these potential egg vertices.  After this choice, we may then assert that the graph has no simple tadpoles, contracting them by convention, to specify a unique summand of the splitting.

%%%%%%%%%%%%%%%%%%%%%%%%%%%%%%%%%%%%%%%%%%%%%%%%%%%%%%%%%%%%%%%%%%%%%%%%

Fix such a summand.  It is indexed by a modular graph $\gamma$ of type $(g,n)$ along with a choice of $m>0$ eggs, having filtration degree $r$ and no simple tadpoles.  Define $Y$ to be the set of eggs of $\gamma$ and define $G=Aut(\gamma)$.  Note that $G$ acts on $Y$ since any automorphism of $\gamma$ must permute the set of eggs. % and let $H$ be the subgroup which fixes the eggs of $\gamma$.  
 We claim that the summand of the associated graded corresponding to $\gamma$ is isomorphic to
\begin{equation}\label{ag}
	\overline{C}_\ast(\Delta_Y)\tensor_G h_\ast(\Gamma)^\ast(\gamma)\hookrightarrow Z(g,n)_r/Z(g,n)_{r+1},
\end{equation}
where $\overline{C}_\ast(\Delta_Y)$ denotes the augmented simplicial chain complex of the abstract simplicial complex consisting of non-empty subsets of $Y$.  In particular $\overline{C}_\ast(\Delta_Y)$ is isomorphic to the simplicial chains of an $m-1$ simplex.  Note that this claim will finish the proof, as the homology of this complex is (rationally) a quotient of zero, hence is zero.  This will then prove the associated graded, and hence $Z(g,n)$, has no homology.

Choose a total order on $Y$.  For each (possibly empty) subset $X\subset Y$, let $\gamma_X$ be the graph formed by hatching the eggs which are not in $X$ to become simple tadpoles.  There is a canonical isomorphism $h_\ast(\Gamma)^\ast(\gamma)\cong h_\ast(\Gamma)^\ast(\gamma_X)$, and we use this isomorphism to define a map
\begin{equation}\label{ag2}
	\overline{C}_\ast(\Delta_{Y})\tensor h_\ast(\Gamma)^\ast(\gamma)\stackrel{\pi}\to Z(g,n)_r/Z(g,n)_{r+1}
\end{equation}
sending $X\tensor h_\ast(\Gamma)^\ast(\gamma) \to h_\ast(\Gamma)^\ast(\gamma_X)_{Aut(\gamma_X)} \subset Z(g,n)_r/Z(g,n)_{r+1} $, where the simple tadpoles are ordered last among the edges of $\gamma_X$, and in the order prescribed by the chosen order on $Y$.

The statement that such a map would be dg is nearly immediate from the description above, since the differential applied to any term $X$ simply removes/expands the elements of $X$ one at a time in each case.  However, some care is needed in handling the signs. Let $x\in X\subset Y$, and suppose that $x$ is proceeded by $r$ elements of $X$ and followed by $s$ elements of $Y \setminus X$.  When an element $x$ is removed from $X$ in the simplicial chain complex, it is accompanied by the sign $(-1)^r$.  On the other hand, when the corresponding simple tadpole is expanded via the differential of the Feynman transform, this edge is placed in the last (rightmost) position by convention, and can be put back in order among $Y\cup \{x\}$ at the expense of a factor of $(-1)^s$.  To account for this, we define the map $\pi$ in equation $\ref{ag2}$ to include a factor of the sign of the shuffle permutation which moves the vertices of the given simplex to the left of the vertices not in the simplex. The product of the signs of the two shuffle permutations corresponding to the simplices $X$ and $X\setminus x$ is equal to $(-1)^{r+s}$, hence $\pi$ is dg.

The map defined in equation $\ref{ag2}$ surjects onto the $\gamma$-summand of the associated graded, so it remains to determine its kernel.  Every element in the source of $\pi$ can be written as a linear combination of elements of the form $X\tensor \{a_v\}$, where $\{a_v\}$ represents the labels of the vertices $v$.  The group $G$ acts on this source by $\sigma(X\tensor \{a_v\}) = \sigma(X)\tensor \sigma\{a_{v}\}$, where $\sigma$ acts by the  vertices and hence the vertex labels by permutation.  To finish the proof it suffices to verify that $ker(\pi)$ is equal to the vector space generated by all elements of the form $\sigma (X \tensor \{a_v\}) - X \tensor \{a_v\}$.

The fact that each difference of the form $\sigma (X \tensor \{a_v\}) - X \tensor \{a_v\}$ is in the kernel of $\pi$ follows immediately from the definition of the Feynman transform, since the terms in this difference map to elements related by an isomorphism in the target.

% Here there is a bit of a discrepancy between the colimit vs coordinate interpretations in terms of how to word this.

So it remains to show that the latter is contained in the former.  For this, first observe that if $\sum X_i \tensor \{a^i_v\} \in ker(\pi)$, then so is each sum over the subset of those terms whose graphs $\gamma_{X_i}$ are isomorphic, since non-isomorphic graphs map to different summands in the target.  So we may assume without loss of generality that $\gamma_{X_i}\cong \gamma_{X_j}$ for each pair of indices in the sum. Let $\sigma_i$ be an isomorphism between $\gamma_{X_i}$ and $\gamma_{X_1}$.  By the previous inclusion, the difference $X_i \tensor \{a^i_v\} - X_1 \tensor \sigma_i^{-1}\{a^i_v\}$ is in the kernel of $\pi$.  Summing up over all $i$, and subtracting $\sum X_i \tensor \{a^i_v\}$, the result is still in the kernel.  But this element is of the form $X_1\tensor \{b_v\}$, for some labels $\{b_v\}$, and such an expression can't be in the kernel without being $0$, since it simply expands a subset of tadpoles from the original labeled graph.  We thus conclude that $\sum X_i \tensor \{a^i_v\} = \sum (X_i \tensor \{a^i_v\} - X_1 \tensor \sigma_i^{-1}\{a^i_v\}) = \sum (X_i \tensor \{a^i_v\} - \sigma_i^{-1} (X_i \tensor \{a^i_v\})$, which verifies the other inclusion.
\end{proof}
%%%%%%%%%%%%%%%%%%%%%%%%%%%%%%%%%%%%%%%%%%%%%%%%%%%%%%%%%%%%%

\subsection{Model Theorem}
Observe that since $Z(g,n)$ is a bigraded and $S_n$-invariant subcomplex of the Feynman transform, the quotient $\FT(h_\ast(\Gamma))(g,n)/Z(g,n)$ inherits a bigrading and $S_n$ action as well.
\begin{definition} For a stable pair $(g,n)$, we define the chain complex
		\begin{equation*}
	\mathsf{L}(g,n) :=\left[(\FT(h_\ast(\Gamma))(g,n)/Z(g,n))\right]^{\ast,s>0},
	\end{equation*}
with differential induced by passage to the quotient by the $s=0$ subcomplex
		\begin{equation} \label{proj}
\left[(\FT(h_\ast(\Gamma))(g,n)/Z(g,n))\right] \twoheadrightarrow \mathsf{L}(g,n).
\end{equation}	
\end{definition}

The complex $\mathsf{L}(g,n)$ can thus be described informally as the span of stable, genus $g$ graphs with $n$ labeled legs, such that each vertex $v$ is labeled by a class in $H_\ast(\Gamma_{g(v),a^{-1}(v)})^\ast$, and subject to the following additional conditions regarding the degrees of vertex labels: at least one vertex has non-zero degree, no tadpole is adjacent to a vertex of degree $0$, and no higher genus vertex is labeled by a class of degree $0$.   Strictly speaking such a labeled graph represents an element only after specifying an order of the edges, up to even permutation.  The differential is given by expanding subgraphs at vertices of such graphs, via linear dualization of Massey products. 

We remark that coloring the vertices which have degree $0$ and genus $0$ black and the vertices which have degree non-zero and genus non-zero red recovers the description given in the introduction.  Note that by definition $\mathsf{L}(0,n)=0$ since in the case $g=0$ the complex $\FT(h_\ast(\Gamma))(0,n)$ is supported on internal degree $s=0$.  From now on we consider $\mathsf{L}(g,n)$ only when $g>0$.

\begin{theorem}  For each $g\geq 2$, the connecting homomorphism in the long exact sequence associated to the surjection in Equation $\ref{proj}$ induces an isomorphism
	\begin{equation*}
	H^i(\mathsf{L}(g,0)) \stackrel{\cong}\longrightarrow	H^{i-2g+1}(\mathsf{GC}_2^g),
	\end{equation*}	
and hence, after \cite{WTw}, an isomorphism %of vector spaces
\begin{equation*}
\mathfrak{grt}_1\cong \bigoplus_{g\geq 3} H^{2g-1}(\mathsf{L}(g,0)).
\end{equation*}
\end{theorem}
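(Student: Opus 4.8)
\emph{Plan.} The statement is designed to fall out of two short exact sequences whose connecting maps are controlled by acyclicity results already established, followed by a recognition of the $s=0$ piece as a shifted commutative graph complex. Throughout write $Q := \FT(h_\ast(\Gamma))(g,0)/Z(g,0)$ for the middle term of the short exact sequence of Theorem \ref{introthm1}. The first thing I would record is that $Q$ is acyclic: the short exact sequence of complexes
\begin{equation*}
0 \longrightarrow Z(g,0) \longrightarrow \FT(h_\ast(\Gamma))(g,0) \longrightarrow Q \longrightarrow 0
\end{equation*}
has acyclic left-hand term by Lemma \ref{acyclicZ} and acyclic middle term by Corollary \ref{aacyclic} (here we use $g\geq 2>0$), so the long exact sequence in cohomology forces $H^\ast(Q)=0$.

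Next I would exploit the defining presentation of $\mathsf{L}(g,0)$ as the quotient of $Q$ by its $s=0$ subcomplex $Q^{\ast,0}$. This gives a second short exact sequence $0 \to Q^{\ast,0} \to Q \to \mathsf{L}(g,0) \to 0$, namely the one attached to Equation \ref{proj}. Since $Q$ is acyclic, the connecting homomorphism of the associated long exact sequence is an isomorphism $H^i(\mathsf{L}(g,0)) \xrightarrow{\cong} H^{i+1}(Q^{\ast,0})$ in every degree. This is exactly the map named in the statement, so everything reduces to identifying the target.

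To identify $Q^{\ast,0}$, I would first note that it is the image in $Q$ of $\FT(h_\ast(\Gamma))(g,0)^{\ast,0}$, which by the discussion following Equation \ref{inj} is $\FT(\overline{\mathsf{Com}})(g,0)$, and whose differential expands a single edge (since $s$ cannot drop below $0$). Passing to $Q$ kills precisely the homogeneous generators containing an egg; as every vertex label here has degree $0$, an egg is exactly a vertex of positive genus or a vertex carrying a tadpole. Hence $Q^{\ast,0}$ is spanned by connected graphs of loop order $g$, all of whose vertices have genus $0$ and which have no tadpoles, oriented by $\det E$, graded by $|E|$, with the edge-expansion differential. This is Willwacher's complex $\mathsf{GC}_2^g$ after the degree shift replacing $|E|$ by $|E|-2g$, i.e.\ $Q^{\ast,0} \cong \Sigma^{2g}\mathsf{GC}_2^g$. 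Checking that the quotient leaves exactly these graphs and that the surviving differential, orientation, and stability conventions agree with those of $\mathsf{GC}_2^g$ is the $s=0$ shadow of the identification underlying Lemma \ref{acyclicZ} (itself an extension of \cite[Theorem 1.1(2)]{CGP2}), and I expect this convention-matching to be the one place demanding genuine care. Combining with the connecting isomorphism yields $H^i(\mathsf{L}(g,0)) \cong H^{i+1}(\Sigma^{2g}\mathsf{GC}_2^g) = H^{i-2g+1}(\mathsf{GC}_2^g)$, which is the first claim.

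For the $\mathfrak{grt}_1$ statement I would specialize $i = 2g-1$, so that $i-2g+1 = 0$ and the first isomorphism reads $H^{2g-1}(\mathsf{L}(g,0)) \cong H^0(\mathsf{GC}_2^g)$. Summing over $g$ and invoking Willwacher's theorem \cite{WTw}, that $H^0(\mathsf{GC}_2)\cong \mathfrak{grt}_1$ with loop-order decomposition $\bigoplus_g H^0(\mathsf{GC}_2^g)$ whose contributions vanish for $g\leq 2$, gives $\bigoplus_{g\geq 3} H^{2g-1}(\mathsf{L}(g,0)) \cong \mathfrak{grt}_1$. The genuinely substantive content is thus concentrated in the identification $Q^{\ast,0}\cong \Sigma^{2g}\mathsf{GC}_2^g$; the two long exact sequences and the final specialization are formal once the acyclicity inputs are in place.
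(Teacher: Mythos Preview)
Your proof is correct and follows essentially the same approach as the paper: both identify the kernel $Q^{\ast,0}$ of the projection \eqref{proj} with $\Sigma^{2g}\mathsf{GC}_2^g$ by recognizing it as the quotient of $\FT(\overline{\mathsf{Com}})(g,0)$ by tadpoles and positive-genus vertices, deduce acyclicity of $Q$ from Corollary~\ref{aacyclic} and Lemma~\ref{acyclicZ}, and then read off the isomorphism from the long exact sequence of \eqref{proj}. Your write-up is slightly more explicit about the two short exact sequences in play, but the substance is identical.
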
	

\begin{proof}  Given $g\geq 2$, and setting $n=0$, we first identify $\Sigma^{2g}\mathsf{GC}_2^g$ with the kernel of Equation $\ref{proj}$.  Indeed, the $S_n$ action is vacuous in this case, and the kernel is the quotient of $\mathsf{FT}(\overline{\mathsf{Com}})(g,0)$ by the subcomplex of those graphs having an egg.  Since each such graph in the kernel is labeled only by degree $0$ elements, we quotient by those $(g,0)$-graphs containing tadpoles and/or higher genus vertices.  The result is tadpole free, stable, connected graphs of genus $g$ with odd edges and no legs, along with the expansion differential.  This is precisely $\mathsf{GC}_2^g$ up to a shift in degree.  To compare degrees, we recall that the degree of an homogeneous element in $\mathsf{GC}_2^g$ is $E-2g$, and in $\mathsf{FT}(\overline{\mathsf{Com}})(g,0)$ it is $E$.

We then use the fact that $\FT(h_\ast(\Gamma))(g,0)$ has no cohomology (Corollary $\ref{aacyclic}$) along with the fact that $Z(g,0)$ has no cohomology (Lemma $\ref{acyclicZ}$) to conclude their quotient has no cohomology as well.  Appealing to the long exact sequence associated to the surjection in Equation $\ref{proj}$ then proves the claim. %We remark that $\mathsf{L}(2,0)=0$, and so we can consider the direct sum in the statement as starting from $g=2$ or $g=3$.  
\end{proof}

The theorem above refers to the special case that $n=0$. For general $n$, the kernel of Equation $\ref{proj}$ is a pointed version of the graph complex $\mathsf{GC}_2$, which we denote by $\Sigma^{2g}\mathsf{GC}^{g,n}_2$ in biarity $(g,n)$.  As above, the connecting homomorphism in the associated long exact sequence will be an isomorphism by Lemma $\ref{acyclicZ}$, provided $g>0$.  We may in turn identify the cohomology of $\mathsf{L}(g,n)$ with the cohomology of $\mathsf{FT(\overline{Com})}(g,n)$ by invoking \cite[Theorem 1.1(2)]{CGP2} which in our language states that the intersection $Z(g,n)\cap \mathsf{FT(\overline{Com})}(g,n)$ has no cohomology (for $g>0$).  In loc.cit.\ the authors interpreted $\mathsf{FT(\overline{Com})}$ as the cochains on  $\Delta_{g,n}$, the moduli space of tropical curves of constant volume, and so in this notation we have:

\begin{theorem}\label{gnthm}  
	 Let $g>0$ and $i\geq 0$. The connecting homomorphism in the long exact sequence associated to the surjection in Equation $\ref{proj}$ induces an isomorphism
	\begin{equation*}
	H^{i}(\mathsf{L}(g,n)) \stackrel{\cong}\longrightarrow H^{i+1}(\Delta_{g,n}).
	\end{equation*}	
\end{theorem}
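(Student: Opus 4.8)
The plan is to extract the isomorphism from one short exact sequence supplemented by two acyclicity inputs, following the template of the $n=0$ case above but replacing the appeal to \cite{WTw} with the tropical interpretation of Chan, Galatius and Payne. First I would record the short exact sequence of complexes associated to the surjection in Equation \ref{proj},
\[
0 \to K \to \FT(h_\ast(\Gamma))(g,n)/Z(g,n) \to \mathsf{L}(g,n) \to 0,
\]
in which $K$ is the kernel of that surjection, namely the internal-degree $s=0$ subcomplex of $\FT(h_\ast(\Gamma))(g,n)/Z(g,n)$. Because $h_0(\Gamma)\cong\overline{\mathsf{Com}}$, this subcomplex is exactly $\FT(\overline{\mathsf{Com}})(g,n)/\bigl(Z(g,n)\cap\FT(\overline{\mathsf{Com}})(g,n)\bigr)$, which, as in the $n=0$ discussion, is the pointed graph complex $\Sigma^{2g}\mathsf{GC}^{g,n}_2$.

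The next step is to show that the middle term is acyclic for $g>0$. Feeding the short exact sequence $0\to Z(g,n)\to\FT(h_\ast(\Gamma))(g,n)\to\FT(h_\ast(\Gamma))(g,n)/Z(g,n)\to 0$ into its long exact sequence, and using that $\FT(h_\ast(\Gamma))(g,n)$ is acyclic for $g>0$ (Corollary \ref{aacyclic}) while $Z(g,n)$ is acyclic (Lemma \ref{acyclicZ}), forces the quotient $\FT(h_\ast(\Gamma))(g,n)/Z(g,n)$ to be acyclic as well. With the middle term of the first sequence acyclic, its long exact sequence degenerates and the connecting homomorphism becomes an isomorphism $H^i(\mathsf{L}(g,n))\xrightarrow{\cong}H^{i+1}(K)$.

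It then remains to identify $H^{i+1}(K)$ with $H^{i+1}(\Delta_{g,n})$. Here I would invoke \cite[Theorem 1.1(2)]{CGP2}, which in the present language states that the subcomplex $Z(g,n)\cap\FT(\overline{\mathsf{Com}})(g,n)$, spanned by commutative graphs carrying a tadpole or a positive-genus vertex, has no cohomology for $g>0$. Combined with the short exact sequence $0\to Z(g,n)\cap\FT(\overline{\mathsf{Com}})(g,n)\to\FT(\overline{\mathsf{Com}})(g,n)\to K\to 0$, this gives $H^{i+1}(K)\cong H^{i+1}(\FT(\overline{\mathsf{Com}})(g,n))$. Finally, the Chan--Galatius--Payne identification of $\FT(\overline{\mathsf{Com}})(g,n)$ with the cochains on the tropical moduli space $\Delta_{g,n}$ yields $H^{i+1}(\FT(\overline{\mathsf{Com}})(g,n))=H^{i+1}(\Delta_{g,n})$, and composing the three isomorphisms proves the theorem. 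Since each map in this chain is canonical and $S_n$-equivariant, the resulting isomorphism is automatically one of $S_n$-modules.

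Everything above is a bookkeeping assembly of previously established facts, so I expect the only genuine obstacle to be the precise translation of \cite[Theorem 1.1(2)]{CGP2}: that statement concerns the Feynman transform of the commutative modular operad, and one must verify that their acyclic loop-and-genus subcomplex matches $Z(g,n)\cap\FT(\overline{\mathsf{Com}})(g,n)$ under our conventions, including the $\mathfrak{K}$-twist ordering of edges, the $\Sigma^{2g}$ degree shift, and the cohomological-versus-homological grading. The remaining sign and degree matching, ensuring that the connecting map lands in degree $i+1$ as stated, is routine.
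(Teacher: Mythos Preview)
Your proposal is correct and follows essentially the same route as the paper's proof: both use acyclicity of $\FT(h_\ast(\Gamma))(g,n)$ and of $Z(g,n)$ to make the middle term of the short exact sequence acyclic, then identify the kernel $K=\Sigma^{2g}\mathsf{GC}_2^{g,n}$ as the quotient of $\FT(\overline{\mathsf{Com}})(g,n)$ by the acyclic (via \cite[Theorem 1.1(2)]{CGP2}) intersection $Z(g,n)\cap\FT(\overline{\mathsf{Com}})(g,n)$, and finally invoke the Chan--Galatius--Payne identification of $\FT(\overline{\mathsf{Com}})(g,n)$ with cochains on $\Delta_{g,n}$. Your write-up is slightly more explicit in spelling out the second short exact sequence and the $S_n$-equivariance, but the logic is identical.
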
	
\begin{proof}  As above, we use the fact that $\FT(h_\ast(\Gamma))(g,n)$ has no cohomology (Corollary $\ref{aacyclic}$) along with the fact that $Z(g,n)$ has no cohomology (Lemma $\ref{acyclicZ}$) to conclude that the connecting homomorphism is an (degree 1) isomorphism from the cohomology of $\mathsf{L}(g,n)$ to the cohomology of $\Sigma^{2g}\mathsf{GC}_2^{g,n}$, which we in turn identify as the quotient of $\mathsf{FT(\overline{Com})}(g,n)$ by $Z(g,n)\cap \mathsf{FT(\overline{Com})}(g,n)$.  The claim then follows from the fact that this intersection has no cohomology \cite[Theorem 1.1(2)]{CGP2}.  We remark that a proof of this final fact can be seen as a version of our proof of Lemma $\ref{acyclicZ}$ above in which all tadpoles are simple, which then would allow us to use the simpler filtration edges minus tadpoles.	
\end{proof}

%%%%%%%%%%%%%%%%%%%%%%%%%%%%%%%%%%%%%%%%
% note this shows Delta_{g,n} has no homology in degree 1 or 2 (2 uses the argument %%-ed out above)
%%%%%%%%%%%%%%%%%%%%%%%%%%%%%%%%%%%%%%%%

\section{Genus 2 versus depth 2.}

The goal of this section is to identify the cohomology of $\mathsf{L}(2,n)_{\Lambda_n}$ with the depth 2 modulo depth 3 summand of the associated graded of $\mathfrak{grt}_1$.  The first step is to identify a set of generators for the cohomology, which are given by even sided polygons with a distinguished genus $1$ vertex.

\subsection{Generators}

% we need some nu notation.
Let $r,s\geq 3$ both odd.  Define $P_{r,s}$ to be graph formed by attaching one leg to each vertex in an $r+1$-gon, except for a distinguished vertex, $v$, at which $s-2$ legs are attached (see Figure $\ref{fig:prs2}$).  Labeling $v$ to have genus $1$, and the remainder of the vertices as genus $0$, the graph $P_{r,s}$ specifies a $1$-dimensional subspace of the chain complex  $\mathsf{L}(2,r+s-2)_{\Lambda_n}$.  To fix a specific non-zero element in this subspace, we first choose an auxiliary total order on the union of the set of edges and legs as follows.  First choose any order on the legs at $v$, which gives us the first $s-2$ elements in the set.  Then, choose an edge adjacent to $v$ as the next element in the set.  Finally travel around the circuit of the $r+1$-gon ordering the remaining edges and legs at their point of first adjacent contact.  Specifically this means alternating edge, leg, edge, leg,... until arriving at the remaining edge adjacent to $v$, which is taken to be the maximal element.   The inherited order on the set of the $s$ flags adjacent to $v$ specifies a non-zero element of $H_{s-1}(\Gamma_{1,a^{-1}(v)})^\ast$, which we take as the label of $v$.   This convention determines a unique non-zero element, independent of the choices made, specifically the choice of leg order at $v$ and the choice of first/last adjacent edge at $v$.  This is because the polygon is odd and the label of $v$ is alternating.  We call this element 
\begin{equation}\label{nu}
\nu_{r,s} \in\mathsf{L}(2,r+s-2)_{\Lambda_n}^{r+1,s-1}.
\end{equation}
\begin{figure}
	\includegraphics[scale=.7]{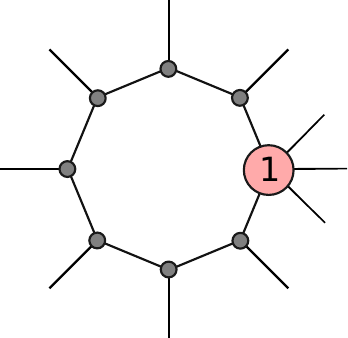}
	\caption{The graph $P_{7,5}$.}
	\label{fig:prs2}
\end{figure}

By Lemma $\ref{upper}$, $\nu_{r,s}$ is a cycle.  Specifically this is seen by applying the final case of the lemma to $\mathsf{ft}(h_\ast(\Gamma))(2,r+s-2)^{r+m+1,s-m}$.   Our next goal will be to compute the image of the cohomology class $[\nu_{r,s}]$ under the isomorphism in Theorem $\ref{gnthm}$.  For this we will first recall some results concerning the cohomology of $\mathsf{GC}_2^{2,n}$ from \cite{CCTW}.

%%%  the complex in CCTW has (for N,m even) has tadpoles but no parallel edges
%%%  the homology is the same by eg \cite{CGP2}

\subsection{Theta complex}\label{thetasec}
The work of \cite{CCTW} shows that the anti-invariants of the complex $\mathsf{GC}_2^{2,n}$ are quasi-isomorphic to the associated complex of theta graphs, as we now recall.  Recall that a theta graph refers to a graph of type $(2,n)$ having two distinguished vertices connected by three sequences of edges, called strands, such that each non-distinguished vertex has one leg, and is thus trivalent.  We moreover consider our theta graphs to come with an order, well defined only up to even permutation, on its set of legs and edges, so as to specify a unique element in $(\mathsf{GC}_2^{2,n})_{\Lambda_{n}}$.  More precisely, we view a theta graph to be graded by its number of edges, and so is itself an element in $(\Sigma ^{2g}\mathsf{GC}_2^{2,n})_{\Lambda_{n}}$.%\footnote{Maybe add a theta picture.}

Let us use the following notation for theta graphs.  Given $0\leq a\leq b\leq c$ and $l\in \{0,1,2\}$ with $a+b+c+l=n$ we define $[a,b,c]^l\in (\mathsf{GC}_{2}^{2,n})_{\Lambda_n}$ to be the theta graph with strands having $a$, $b$ and $c$ legs respectively, $l$ of whose distinguished vertices also have a leg.  We will fix a specific element for each $a,b,c,l$ by specifying the order of the edges and legs to be as follows.  First we order the $l$ legs labeling the distinguished vertices.  Then we order the legs and edges along the $a$-strand, then the $b$-strand, then the $c$-strand, starting from the distinguished vertex adjacent to the (lesser) leg when $l=1$ (resp.\ $l=2$).  When $l$ is even, note that a reflection which interchanges the distinguished vertices produces a sign of  $(-1)^{a+b+c+\frac{l}{2}}$, and so the element vanishes if $n$ is odd and $l=0$ and also vanishes if $n$ is even and $l=2$.  Hence, our convention specifies a single element in the coinvariants of the automorphism group action, regardless of which side of the theta we started on in the case $l=0$.  Also note that if $a=b$ or $b=c$, the automorphism which interchanges the strands is multiplication by $-1$, and so the element vanishes.  We thus assume $0\leq a<b<c$ from now on.  The degree of  $[a,b,c]^l$ is $a+b+c+3$.

We remark here that our sign conventions differ slightly from those of \cite{CCTW} as well as \cite{MFelder}.  Given our conventions, a direct calculation shows that the expansion differential is:
\begin{equation}\label{d1}
d([a,b,c]^1) =  [a+1,b,c]^0 - [a,b+1,c]^0+ [a,b,c+1]^0,
\end{equation}
and if $a+b+c$ is odd:
\begin{equation}
d([a,b,c]^2) =  2[a+1,b,c]^1-2[a,b+1,c]^1+2[a,b,c+1]^1.
\end{equation}

Define $\Theta_{n}\subset (\Sigma^{2g}\mathsf{GC}_{2}^{2,n})_{\Lambda_n}$ to be the subcomplex spanned by all such theta graphs, i.e.\ all the elements $[a,b,c]^l$ such that $a+b+c+l=n$.  That $\Theta_n$ is a subcomplex (under the expansion differential) is immediate.  By the above analysis, when $n$ is even, there is an isomorphism $\Theta_n\stackrel{\cong}\rightarrow \Theta_{n+1}$ given by ``integration'', i.e.\ $[a,b,c]^l \mapsto \frac{1}{l+1}[a,b,c]^{l+1}$.

The inclusion $\Theta_{n}\subset (\Sigma^{2g}\mathsf{GC}_{2}^{2,n})_{\Lambda_n}$ is a quasi-isomorphism \cite[Theorem 3.1]{CCTW}.  Strictly speaking, \cite{CCTW} considers a slightly larger commutative graph complex which allows tadpoles, but it has the same cohomology in genus $2$, after \cite[Theorem 1.1(1)]{CGP2}.  Combining this with Theorem $\ref{gnthm}$ we have isomorphisms
\begin{equation}\label{isos}
H^{\ast+2}(\mathsf{L}(2,n)_{\Lambda_n}) \stackrel{\cong}\to H^{\ast-1}(\mathsf{GC}^{2,n}_2)_{\Lambda_n}\stackrel{\cong}\leftarrow H^{\ast+3}(\Theta_n).
\end{equation}

Below we restrict attention to even $n$.  For example when $n=10$ only the following are non-zero,
\begin{equation}
H^{12}(\mathsf{L}(2,10)_{\Lambda_{10}}) \stackrel{\cong}\to H^9(\mathsf{GC}^{2,10}_2)_{\Lambda_{10}}\stackrel{\cong}\leftarrow H^{13}(\Theta_{10})
\end{equation}
generated on the right hand side by the classes of $[0,2,8]^0$ or $[0,4,6]^0$, which are subject to one linear relation.

\subsection{Cohomology generators of $\mathsf{L}(2,n)_{\Lambda_n}$}

We next calculate the preimage of the generators $[0,r-1,s-1]^0$ under the isomorphism induced by Equation $\ref{isos}$ and identify them, up to non-zero scalar multiples, as the classes of $\nu_{r,s}$.

\begin{proposition}\label{chain}  Let $r>s$ both odd and $n=r+s-2$.  The isomorphism $H^{n+2}(\mathsf{L}(2,n)_{\Lambda_n})\to H^{n+3}(\Theta_n)$ induced via Equation
$\ref{isos}$ sends $\ds\frac{-[\nu_{r,s}]}{(s-2)!}$ to the class of $[0,r-1,s-1]^0$.	
\end{proposition}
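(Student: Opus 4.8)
The plan is to compute the image of $[\nu_{r,s}]$ under the connecting homomorphism of Theorem~\ref{gnthm} explicitly, and then to reduce the resulting cocycle to normal form inside the theta complex $\Theta_n$.

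First I would unwind the connecting homomorphism. The element $\nu_{r,s}$ is supported in internal degree $s-1$, and its only vertex of positive internal degree is the distinguished genus $1$ vertex $v$, labelled by $\alpha_s^\ast$, where $s=2j+1$ and $\alpha_s$ spans $H_{s-1}(\Gamma_{1,s})$ by Lemma~\ref{genus1}. Since $\nu_{r,s}$ already lies in the $s>0$ part, it is its own lift to $\FT(h_\ast(\Gamma))(2,n)/Z(2,n)$, and the connecting homomorphism returns the internal-degree-$0$ component of $\partial\nu_{r,s}$, which lands in $\Sigma^{4}\mathsf{GC}^{2,n}_2$. Each term of the differential expands a single vertex; by the bigrading of~\eqref{d} the whole internal degree $s-1$ sits at $v$, so a term can reach internal degree $0$ only by expanding $v$ into a subgraph with $s$ edges whose vertices all have genus $0$ and internal degree $0$ (expansions at the genus $0$ trivalent vertices keep the class in positive internal degree and vanish by stability anyway). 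Working modulo $Z(2,n)$ kills every expansion term containing an egg, so this subgraph must be a genus $1$ graph all of whose vertices are genus $0$; by the leaf-vertex argument in the proof of Lemma~\ref{genus1} the only such graph is the polygon $\mathsf{P}_s$.

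Next I would apply Corollary~\ref{dalpha} to write this expansion as $\sum_{\sigma\in S_s}sgn(\sigma)\,\mathsf{P}_s$ inserted at $v$, distributing the $s$ flags of $v$ (the two polygon edges running to the external path of $r$ legged vertices, together with the $s-2$ legs) around the $s$-gon. Each resulting graph is a theta graph: the two vertices carrying the edge-flags become the two distinguished trivalent vertices, the two arcs of the $s$-gon (carrying $a$ and $b$ legs with $a+b=s-2$) become two strands, and the external path (carrying $r$ legs) becomes the third. Since these theta graphs already lie in $\Theta_n$ and the inclusion $\Theta_n\hookrightarrow(\Sigma^{4}\mathsf{GC}^{2,n}_2)_{\Lambda_n}$ is a quasi-isomorphism, the class of $[\nu_{r,s}]$ in $H^{n+3}(\Theta_n)$ is represented by a signed sum $\sum_{a+b=s-2}c_{a,b}\,[a,b,r]^0$.

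Finally I would pin down the coefficients and reduce. For a fixed placement of the two edge-flags, summing over the $(s-2)!$ orderings of the remaining legs and matching $sgn(\sigma)$ against the total antisymmetry of $[a,b,r]^0$ in the anti-invariants yields a factor of $(s-2)!$ times a single normalized theta element; the residual signs come from the edge orderings fixed in the definitions of $\nu_{r,s}$ and $[a,b,c]^0$, from the reflection interchanging the two distinguished vertices, and from the shuffle signs in the Feynman differential. I would then rewrite $\sum c_{a,b}[a,b,r]^0$ as a multiple of the normal form $[0,r-1,s-1]^0$ modulo coboundaries, using the relation $[a+1,b,c]^0-[a,b+1,c]^0+[a,b,c+1]^0=d([a,b,c]^1)$ from~\eqref{d1} together with the vanishing of $[x,x,y]^0$; this telescoping transfers one leg off the length-$r$ strand and collects the overall scalar $-(s-2)!$, giving $\frac{-[\nu_{r,s}]}{(s-2)!}\mapsto[0,r-1,s-1]^0$. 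The main obstacle is precisely this combined sign-and-coefficient bookkeeping: coordinating the three independent ordering conventions (the flag order at $v$, the circuit order on $\mathsf{P}_s$, and the strand order defining $[a,b,c]^0$) through the antisymmetrization and the coboundary reduction, so that the scalar comes out to exactly $-(s-2)!$ rather than merely being proportional to it. I would cross-check the outcome against the small cases (for $n=10$, where $(r,s)=(9,3)$ gives $[0,1,9]^0\equiv[0,2,8]^0$ and $(r,s)=(7,5)$ gives $[0,4,6]^0$) and against the normalization of Felder's isomorphism in~\cite{MFelder}.
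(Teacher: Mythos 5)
Your proposal follows the same route as the paper's own proof: realize the connecting homomorphism of Theorem~\ref{gnthm} by computing $d(\nu_{r,s})$ in $\FT(h_\ast(\Gamma))(2,n)$, use Corollary~\ref{dalpha} to see that the only surviving expansion of the genus one vertex is into the polygon $\mathsf{P}_s$, so that $d(\nu_{r,s})=\sum_{q}c_q\,[q,s-2-q,r]^0$, and then reduce this sum to the normal form modulo coboundaries via the telescoping consequence of Equation~\ref{d1} (this is exactly the paper's Equation~\ref{dcalc}). Your structural reductions --- that $\nu_{r,s}$ is its own lift, that working modulo $Z(2,n)$ and stability forces the expanded subgraph to be $\mathsf{P}_s$, and your small-case cross-checks --- are all correct and match the paper.

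However, what you defer as ``the main obstacle'' is a genuine gap, not a routine detail: it is the quantitative content of the proposition, and the normalization $-(s-2)!$ is precisely what Theorem~\ref{g2thm} needs. That the image is \emph{some} combination $\sum_q c_q[q,s-2-q,r]^0$ follows formally; your proposal asserts the count of $(s-2)!$ and lists the sources of signs, but never rules out partial cancellation among the $(s-1)!/2$ polygon terms, nor shows the sign of $c_q$ is independent of $q$. The paper's proof spends most of its length on exactly this, in three steps: (i) since the legs of the representative $\nu^n_{r,s}$ are labeled, each type $[q,s-2-q,r]^0$ arises from exactly $(s-2)!$ of the polygon terms of $d(\alpha_s^\ast)$; (ii) two terms of the same type are related by a permutation $\tau$ of the labels $\{1,\dots,s-2\}$, which contributes $sgn(\tau)$ both to the coefficient in $d(\alpha_s^\ast)$ and to the identification in the anti-coinvariants, so they add with the same sign; (iii) the sign is independent of $q$: the term of type $[q,s-2-q,r]^0$ obtained by a single transposition of the leading dihedral order requires reflecting the middle strand to reach standard form, costing $\floor{\frac{s-2-q}{2}}+\floor{\frac{s-q}{2}}$ transpositions --- odd for every $q$ --- and this $-1$ cancels the $sgn(\tau)=-1$ of the transposition; combined with the overall Leibniz sign $-1$ from moving $d$ past the self-gluing of the $r$-strand, this gives $c_q=-(s-2)!$ for all $q$. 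Without (ii) and (iii), the coefficients could a priori carry $q$-dependent signs, in which case the telescoping identity of Equation~\ref{dcalc} would no longer convert the sum into a single multiple of $[0,s-1,r-1]^0$, so your argument as written establishes proportionality at best.
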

\begin{proof}  The isomorphism in Theorem $\ref{gnthm}$ can be determined by viewing $\nu_{r,s}\in \FT(h_\ast(\Gamma))(2,n)_{\Lambda_n}$ and computing its differential.  The result will represent the image of $[\nu_{r,s}]$ under the connecting isomorphism.  The terms of $d(\nu_{r,s})$ are given by expanding the vertex $v$ to a stable $s$-gon, two of whose legs are joined to the original $r+1$-gon.  Therefore the differential must have the form:
\begin{equation*}
d(\nu_{r,s}) = \sum_{q=0}^{\frac{s-3}{2}} c_q[q,s-2-q,r]^0,
\end{equation*}
for some coefficients $c_q$.  Note that every differential term is of the form $[-,-,r]^0$ since $r>s$.

On the other hand a direct calculation shows
\begin{equation}\label{dcalc}
d(\ds\sum_{q=0}^{\frac{s-3}{2}} [q,s-2-q,r-1]^1) = -[0,s-1,r-1]^0 + \sum_{q=0}^{\frac{s-3}{2}} [q,s-2-q, r]^0.
\end{equation}
This calculation is done by invoking Equation $\ref{d1}$, making successive cancellations until arriving at a term of the form $[\frac{s-1}{2},\frac{s-1}{2},r-1]^0$, which vanishes.  Thus, to prove the claim it remains to show that the coefficient $c_q$ above is independent of $q$, and equals $-(s-2)!$.

The differential $d$ which is applied to $\nu_{r,s}$ is the passage to coinvariants of the (weak) Feynman transform differential.  To describe it explicitly we first choose a representative of the coinvariant class $\nu_{r,s}$ by labeling its legs by the set $\{1\cdc n\}$.  By convention, we choose the order which matches the order given in the definition of $\nu_{r,s}$ -- namely first label the legs of the genus $1$ vertex, then travel around the $r+1$-circuit labeling successively at the point of first contact.  Call this labeled element $\nu_{r,s}^n \in \FT(h_\ast(\Gamma))(2,n)$.

The only vertex of $\nu_{r,s}^n$ which permits an expansion is the genus $1$ vertex.  So to apply the Feynman transform differential to $\nu_{r,s}^n$ we first consider the Feynman transform differential applied to the $s$-corolla with label $\alpha_s^\ast\in H_{s-1}(\Gamma_{1,s})^\ast$.   By Corollary $\ref{dalpha}$, the terms in the differential are given by expanding this vertex to an $s$-gon with all possible leg labels.   By convention, above, the induced edge order appearing in such a differential term is the class of any standard dihedral order, all of which are related by an even permutation.  There are $(s-1)!/2$ such terms in this differential.

To each term in $d(\alpha^\ast_s)$ above, there is an associated differential term in  $d_{\FT}(\nu_{r,s}^n)$ given by gluing in the $r$-strand to the legs labeled $s-1$ and $s$, with orientation (by convention) chosen so that it runs from $s$ to $s-1$.  These $\frac{(s-1)!}{2}$ terms constitute the differential $d_{\FT}(\nu_{r,s}^n)$ up to an overall sign.  This overall sign is given by the Leibniz rule which moves the differential past the modular operadic compositions, and since $r+1$ and $s-1$ are both even, the only contribution comes from moving $d$ over the final self gluing of the $r$-strand, and hence is $-1$.

Each term in the differential specifies a leg-labeled theta graph whose long strand has $r$ legs.  Since the legs are labeled, the number of terms whose shortest strand has $q$ legs is independent of $q$ and is equal to $(s-2)!$.  Passing to anti-coinvariants (using the leg order induced from the labeling) we see that each of the differential terms is a theta graph of type $[q,s-2-q,r]^0$ for some $q<s-2-q$. 

Let us now show that two theta graphs of type $[q,s-2-q,r]^0$ appearing in $d(\nu_{r,s})$ appear with the same sign after passage to anti-coinvariants.  Two such terms arise after gluing in the $r$-strand to corresponding labelings of the $s$-gon.  These two labeled $s$-gons correspond to terms of $d(\alpha^\ast_s)$, and so are related by a permutation of the labels $\{1\cdc s-2\}$ (since they have the same shortest strand, we can assume this permutation fixes $s$ and $s-1$).  If $\tau$ is this permutation then the coefficients of the corresponding terms in $d(\alpha^\ast_s)$ differ by a factor of $sgn(\tau)$ (note the edge order plays no role here because the two edge orders must be related by an even permutation).  On the other hand, comparing these terms in $d_{\FT}(\nu_{r,s}^n)$ would require applying the same permutation after gluing in the $r$-strand, but before passing to coinvariants.  After passage to coinvariants this too would produce $sgn(\tau)$.  Thus in $d(\nu_{r,s})$ these two terms appear with the same sign.

From this we conclude
\begin{equation*}
d(\nu_{r,s}) = -(s-2)!\sum_{q=0}^{\frac{s-3}{2}} \pm[q,s-2-q,r]^0
\end{equation*}
and it remains to check that the signs inside this sum coincide for each $q$.

Observe that for any $q$, we can produce a theta graph of type $[q,s-2-q,r]^0$ by first applying a transposition of $s-1$ with $q+1$ to the leading term (standard dihedral order) of $d(\alpha^\ast_s)$ and then gluing in the $r$-strand. In particular, gluing in the $r$-strand before transposing gives a theta graph of type $[0,s-2,r]^0$, while gluing in the $r$-strand after transposing gives a theta graph of type $[q,s-2-q,r]^0$.  It remains to compare the signs with which these terms appear in $d(\nu_{r,s})$. 

Recall our convention for signs of theta graphs is that $[a,b,c]^0$ takes $a$ then $b$ then $c$ from ``left to right'' on each strand (the result being independent of the planar embedding).  To determine the sign of the $[0,s-2,r]^0$ term note while its strands are ordered from left to right, the shortest strand (with no flags) is second, so has to be moved over the $s-1$ edges of the middle strand, which produces a plus sign since $s$ is odd.  So we get $+[0,s-2,r]^0$ appearing.

To determine the sign of $[q,s-2-q,r]^0$ note that the short strand has $1$ through $q$ starting at the vertex formally adjacent to $s$ as desired, and the third strand is also correct by convention.  To make the comparison to our standard convention it suffices to correct the second strand (since it's oriented from right to left after the gluing).  This requires ``reflecting'' both the legs and edges of strand.  For the legs this requires $\floor{\frac{s-2-q}{2}}$ transpositions and for the edges it requires $\floor{\frac{s-q}{2}}$ transpositions so the sign is $-1$, regardless of $q$.  But this term also carries a minus sign in the sum $d(\alpha_s^\ast)$ (because it came from a transposition applied to the leading term) so the total result is a plus sign, and we also get $+[q,s-q-2,r]$ appearing in $d(\nu_{r,s})$.

We've thus shown, with these conventions, that 
\begin{equation*}
d(\nu_{r,s}) = -(s-2)!\sum_{q=0}^{\frac{s-3}{2}} [q,s-2-q,r]^0
\end{equation*}
and so Equation $\ref{dcalc}$ implies that $d(\nu_{r,s})$ and $-(s-2)![0,s-1,r-1]^0$ represent the same class in the cohomology of $\mathsf{GC}_{2}^{2,n}$, from which the claim follows.
\end{proof}

\subsection{Isomorphism Theorem}	
For $3\leq s <r$ both odd define $\rho_{r,s} = \ds\frac{-\nu_{r,s}}{(s-2)!}$.  The calculation in Proposition $\ref{chain}$ allows us to prove:

\begin{theorem}\label{g2thm}	The unique linear map which sends $[\rho_{r,s}]\mapsto \{\sigma_{r},\sigma_{s}\}$ for all $3\leq s <r$ both odd, induces an isomorphism %(of vector spaces):
	\begin{equation*}
	\bigoplus_{n\geq 4, \text{ even}} H^{\ast}(\mathsf{L}(2,n)_{\Lambda_n}) \stackrel{\cong}\longrightarrow \op{F}_2(
	\mathfrak{grt}_1)/\op{F}_3(
	\mathfrak{grt}_1).
	\end{equation*}
\end{theorem}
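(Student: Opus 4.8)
The plan is to realize the map of the statement as a composite of two isomorphisms already at our disposal, and then to check that this composite carries $[\rho_{r,s}]$ to $\{\sigma_r,\sigma_s\}$ exactly.

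First I would combine the chain of isomorphisms in Equation~\ref{isos} with Proposition~\ref{chain} to identify $H^{n+2}(\mathsf{L}(2,n)_{\Lambda_n})$ with $H^{n+3}(\Theta_n)$ in such a way that $[\rho_{r,s}]$ is carried to the theta class $[0,r-1,s-1]^0$, for each pair of odd integers $r>s\geq 3$ with $r+s-2=n$. By the computation recorded in \cite{CCTW}, the anti-invariant cohomology of $\Theta_n$ (for even $n$) is concentrated in top degree and spanned there by precisely the classes $[0,b,c]^0$ with a single empty strand; correspondingly $H^{\ast}(\mathsf{L}(2,n)_{\Lambda_n})$ is supported in degree $n+2$. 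Two things follow at once: the classes $[\rho_{r,s}]$ span $H^{\ast}(\mathsf{L}(2,n)_{\Lambda_n})$, so the linear map in the statement is well-determined by its values on them; and verifying that it is an isomorphism reduces to verifying that the induced assignment $[0,r-1,s-1]^0\mapsto\{\sigma_r,\sigma_s\}$ defines an isomorphism $H^{n+3}(\Theta_n)\xrightarrow{\cong}(\op{F}_2(\mathfrak{grt}_1)/\op{F}_3(\mathfrak{grt}_1))_{n+2}$.

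Second, I would match this assignment with the known descriptions of both sides. By Schneps, after Ihara \cite{Schneps}, the weight-$(n+2)$ component of $\op{F}_2(\mathfrak{grt}_1)/\op{F}_3(\mathfrak{grt}_1)$ is spanned by the brackets $\{\sigma_r,\sigma_s\}$ modulo one linear relation per even period polynomial of weight $n+2$. On the theta side, the differential $d([a,b,c]^1)=[a+1,b,c]^0-[a,b+1,c]^0+[a,b,c+1]^0$ lets one reduce every $[a,b,c]^0$ to a combination of the $[0,b,c]^0$, and the residual relations among these spanning classes are counted by the same period polynomials. That these two sets of relations correspond under $[0,r-1,s-1]^0\mapsto\{\sigma_r,\sigma_s\}$—so that the assignment is genuinely well defined and bijective, and not merely dimension-preserving—is exactly the content of the isomorphism of M. Felder in \cite{MFelder}, applied after the parity shift indicated there. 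Composing with the identification of the first step and summing over even $n\geq 4$ then produces the global isomorphism of the statement.

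The step I expect to be the main obstacle is the reconciliation of sign conventions. As flagged in the text, our conventions for the theta graphs differ from those of both \cite{CCTW} and \cite{MFelder}, so the careful bookkeeping needed to promote Felder's isomorphism to the clean assignment $[0,r-1,s-1]^0\mapsto\{\sigma_r,\sigma_s\}$—and in particular to confirm that the relation attached to each period polynomial is carried to the corresponding Ihara relation with no spurious sign or nonzero scalar—is where the real work lies. Once the generators and the differential of $\Theta_n$ have been aligned with Felder's period-polynomial pairing under the parity shift, well-definedness and bijectivity are formal consequences of his theorem; the role of Proposition~\ref{chain} is precisely to fix this alignment on the explicit generators $\nu_{r,s}$.
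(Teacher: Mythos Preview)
Your proposal is correct and follows essentially the same route as the paper: both factor the desired isomorphism as the composite of the connecting isomorphism from Proposition~\ref{chain} (landing in $H^\ast(\Theta_n)$) with Felder's isomorphism to the depth-2 associated graded. The only cosmetic difference is that the paper inserts the explicit integration isomorphism $\Theta_n\cong\Theta_{n+1}$ before quoting Felder (whose map in the paper's conventions is stated on classes $[0,s-1,r-1]^1$ in $\Theta_{n+1}$), whereas you absorb this into the phrase ``parity shift''; once that step is named, your sign-reconciliation worry dissolves, since Felder's theorem is invoked as a black box and Proposition~\ref{chain} already tracks the generator through.
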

\begin{proof}  The difficult work has been done above.  Fix such an even $n$ and consider the following sequence of isomorphisms:
		\begin{equation}\label{maps}
	H^\ast(\mathsf{L}(2,n)_{\Lambda_n}) \stackrel{\cong}\to H^\ast(\Theta_{n})\stackrel{\cong}\rightarrow H^\ast(\Theta_{n+1})\stackrel{\cong}\rightarrow (\op{F}_2(\mathfrak{grt}_1)/\op{F}_3(\mathfrak{grt}_1))_{n+2}
	\end{equation}
The left most map is that from Proposition $\ref{chain}$.  The middle map is induced by chain level isomorphism described above (subsection $\ref{thetasec}$) and the rightmost map is the isomorphism sending the class of $[0,s-1,r-1]^1$ to $\{\sigma_r,\sigma_s\}$ as in \cite{MFelder}.  From the calculation in Proposition $\ref{chain}$ we see that this composite sends $[\rho_{r,s}]$ to $\{\sigma_{r},\sigma_{s}\}$, from which the claim follows.
\end{proof}
	
	By way of conclusion, let us observe that a choice of Massey products as made above determines an isomorphism between $H_\ast(\Gamma_{2,n})^\ast$ and the linear relations among the brackets in $(\op{F}_2(\mathfrak{grt}_1)/\op{F}_3(\mathfrak{grt}_1))_{n+2}$, for $n$ even. First, consider the short exact sequence.
	
	\begin{equation}\label{ses}
	0\to \widehat{\mathsf{L}}(2,n) \hookrightarrow \mathsf{L}(2,n) \twoheadrightarrow H_\ast(\Gamma_{2,n})^\ast_{\Lambda_n} \to 0
	\end{equation}
	where the right hand map is the projection to corollas and $\widehat{\mathsf{L}}$ denotes the kernel of this projection.  
	
	Combining \cite[Theorem 6.2]{CCTW} and Theorem $\ref{gnthm}$ above we know that $\mathsf{L}(2,n)$ has cohomology only in degree $n+2$. 
	 	By \cite[Theorem 2.10]{CHKV} we know that $H_\ast(\Gamma_{2,n})^\ast_{\Lambda_n}$ is supported in degree $n+1$.  Thus the associated long exact sequence is a short exact sequence of the form:
	\begin{equation*}
	\dots \to 0\to H^{n+1}(\Gamma_{2,n})^\ast_{\Lambda_n} \stackrel{\partial}\hookrightarrow H^{n+2}(\widehat{\mathsf{L}}(2,n))\stackrel{\pi}\twoheadrightarrow   H^{n+2}(\mathsf{L}(2,n))\to 0 \to \dots
	\end{equation*}
where $\partial$ denotes the connecting homomorphism.  Therefore the classes $[\nu_{r,s}]$ with $r>s$ are a basis for the homology of $\widehat{\mathsf{L}}(2,n)$ (they have the right dimension and are linearly independent since they belong to different summands).  We may thus identify 
$$H^{\ast}(\mathsf{L}(2,n)_{\Lambda_n}) = span\{ \nu_{r,s}\} / im(\partial),$$
and since $\partial$ is injective, this specifies an isomorphism between $H_\ast(\Gamma_{2,n})^\ast$ and the linear relations among the brackets in $(\op{F}_2(\mathfrak{grt}_1)/\op{F}_3(\mathfrak{grt}_1))_{n+2}$.

%\bibliography{modularbib2}

\begin{thebibliography}{CCTW14}
	
	\bibitem[Bro12]{Brown}
	Francis Brown.
	\newblock Mixed {T}ate motives over {$\Bbb Z$}.
	\newblock {\em Ann. of Math. (2)}, 175(2):949--976, 2012.
	
	\bibitem[CCTW14]{CCTW}
	Jim Conant, Jean Costello, Victor Turchin, and Patrick Weed.
	\newblock Two-loop part of the rational homotopy of spaces of long embeddings.
	\newblock {\em J. Knot Theory Ramifications}, 23(4):1450018, 23, 2014.
	
	\bibitem[CGP21]{CGP}
	Melody Chan, S{\o}ren Galatius, and Sam Payne.
	\newblock Tropical curves, graph complexes, and top weight cohomology of
	{$\mathcal{M}_g$}.
	\newblock {\em J. Amer. Math. Soc.}, 34(2):565--594, 2021.
	
	\bibitem[CGP22]{CGP2}
	Melody Chan, S{\o}ren Galatius, and Sam Payne.
	\newblock Topology of moduli spaces of tropical curves with marked points.
	\newblock In {\em Facets of algebraic geometry. {V}ol. {I}}, volume 472 of {\em
		London Math. Soc. Lecture Note Ser.}, pages 77--131. Cambridge Univ. Press,
	Cambridge, 2022.
	
	\bibitem[CHKV16]{CHKV}
	James Conant, Allen Hatcher, Martin Kassabov, and Karen Vogtmann.
	\newblock Assembling homology classes in automorphism groups of free groups.
	\newblock {\em Comment. Math. Helv.}, 91(4):751--806, 2016.
	
	\bibitem[Fel18]{MFelder}
	Matteo Felder.
	\newblock Filtrations on graph complexes and the
	{G}rothendieck-{T}eichm\"{u}ller {L}ie algebra in depth two.
	\newblock {\em Selecta Math. (N.S.)}, 24(3):2063--2092, 2018.
	
	\bibitem[GK98]{GeK2}
	E.~Getzler and M.~M. Kapranov.
	\newblock Modular operads.
	\newblock {\em Compositio Math.}, 110(1):65--126, 1998.
	
	\bibitem[Iha02]{Ihara}
	Yasutaka Ihara.
	\newblock Some arithmetic aspects of {G}alois actions in the pro-{$p$}
	fundamental group of {$\Bbb P^1\setminus\{0,1,\infty\}$}.
	\newblock In {\em Arithmetic fundamental groups and noncommutative algebra
		({B}erkeley, {CA}, 1999)}, volume~70 of {\em Proc. Sympos. Pure Math.}, pages
	247--273. Amer. Math. Soc., Providence, RI, 2002.
	
	\bibitem[Kon93]{KontSymp}
	Maxim Kontsevich.
	\newblock Formal (non)commutative symplectic geometry.
	\newblock In {\em The {G}elfand {M}athematical {S}eminars, 1990--1992}, pages
	173--187. Birkh\"auser Boston, Boston, MA, 1993.
	
	\bibitem[Sch06]{Schneps}
	Leila Schneps.
	\newblock On the {P}oisson bracket on the free {L}ie algebra in two generators.
	\newblock {\em J. Lie Theory}, 16(1):19--37, 2006.
	

	
	\bibitem[War22]{WardMP}
	Benjamin~C. Ward.
	\newblock Massey {P}roducts for {G}raph {H}omology.
	\newblock {\em Int. Math. Res. Not. IMRN}, (11):8086--8161, 2022.
	
	\bibitem[War23]{WardWheels}
Benjamin~C. Ward.
\newblock Wheel graph homology classes via {L}ie graph homology.
\newblock {\em J. Noncommut. Geom.}, 17(2):693--717, 2023.
	
	\bibitem[Wil15]{WTw}
	Thomas Willwacher.
	\newblock M. {K}ontsevich's graph complex and the
	{G}rothendieck--{T}eichm\"uller {L}ie algebra.
	\newblock {\em Invent. Math.}, 200(3):671--760, 2015.
	
\end{thebibliography}
%\bibliographystyle{alpha}
\end{document}